\newtheorem{theorem}{Theorem}[section]
\newtheorem{proposition}{Proposition}[section]
\newtheorem*{theorem*}{Theorem}
\newtheorem{corollary}{Corollary}[section]
\newtheorem{definition}{Definition}[section]
\newtheorem{lemma}{Lemma}[section]
\newtheorem{claim}{Claim}[section]
\numberwithin{equation}{section}
\begin{document}

\title{Hyperlinear approximations to amenable groups come from sofic approximations}
\author{Peter Burton}

\maketitle

\begin{abstract} We provide a quantitative formulation of the equivalence between hyperlinearity and soficity for amenable groups, showing that every hyperlinear approximation to such a group is essentially produced from a sofic approximation. This translates to a quantitative relationship between Hilbert-Schmidt and permutation stability for approximate homomorphisms which appropriately separate the elements of the group. \end{abstract}

\section{Introduction}

\subsection{Sofic and hyperlinear approximations}

Soficity and hyperlinearity are two ways of expressing the idea that a countable discrete group is a limit of finite approximations. In the sofic case, the relevant approximations are partial actions by permutations finite sets which model the action of the group on itself by left-translations. In the hyperlinear case, the relevant approximations are partial unitary representations on finite dimensional Hilbert spaces which model the left regular representations of the group. It is unknown whether every countable discrete group is sofic, and the same question is open for hyperlinearity. We refer the reader to the following surveys on these topics: \cite{pestov-sofic-survey, capraro-lupini}.\\
\\
We now present the relevant definitions. If $V$ is a finite set we write $\mathrm{Sym}(V)$ for the group of permutations of $V$.

\begin{definition} Let $G$ be a countable discrete group, let $F \subseteq G$ be finite and let $\epsilon > 0$. We define an $(F,\epsilon)$ \textbf{sofic approximation} to $G$ to be a finite set $V$ and a map $\sigma:G \to \mathrm{Sym}(V)$ such that the following hold. \begin{itemize} \item For all $g,h \in F$ we have \[ \frac{1}{|V|}|\{v \in V: \sigma(g)\sigma(h)v \neq \sigma(gh)v\}| \leq \epsilon  \] \item For every distinct pair $g,h \in F$ we have \[ \frac{1}{|V|}|\{v \in V:\sigma(g)v \geq \sigma(h)v\}| \geq 1- \epsilon \] \end{itemize} We define the group $G$ to be \textbf{sofic} if there exists a sequence $(\sigma_n)_{n=1}^\infty$ such that $\sigma_n$ is an $(F_n,\epsilon_n)$-sofic approximation to $G$ where $(F_n)_{n=1}^\infty$ is an increasing sequence of finite subsets of $G$ whose union is the entire group and $(\epsilon_n)_{n=1}^\infty$ is a decreasing sequence of positive numbers whose limit is zero. \end{definition}

If $X$ is a Hilbert space we write $\mathrm{U}(X)$ for the group of unitary operators on $X$. If $X$ is finite dimensional we write $\Delta(X)$ for the dimension of $X$ and in this case if $a$ is a linear operator on $X$ we define the Hilbert-Schmidt norm of $a$ by \[ ||a||_{\mathrm{HS}} = \sqrt{\frac{1}{\Delta(X)}\,\mathrm{trace}(a^\ast a)} \] 

\begin{definition} Let $G$ be a countable discrete group, let $F \subseteq G$ be finite and let $\epsilon > 0$. We define an $(F,\epsilon)$ \textbf{hyperlinear approximation} to $G$ to be a finite dimensional Hilbert space $X$ and a function $\alpha:G \to \mathrm{U}(X)$ such that the following hold. \begin{itemize} \item For all $g,h \in F$ we have \[ ||\alpha(gh) - \alpha(g)\alpha(h) ||_{\mathrm{HS}} \leq \epsilon \] \item For every distinct pair $g,h \in F$ we have \[ ||\alpha(g) - \alpha(h)||_{\mathrm{HS}} \geq \sqrt{2} - \epsilon \] \end{itemize} We define the group $G$ to be \textbf{hyperlinear} if there exists a sequence $(\alpha_n)_{n=1}^\infty$ such that $\alpha_n$ is an $(F_n,\epsilon_n)$ hyperlinear approximation to $G$ where $(F_n)_{n=1}^\infty$ is an increasing sequence of finite subsets of $G$ whose union is the entire group and $(\epsilon_n)_{n=1}^\infty$ is a decreasing sequence of positive numbers whose limit is zero. \end{definition}

There is a natural way to produce a hyperlinear approximation from a sofic approximation, as described below.

\begin{definition} \label{def.sofhyp} Let $G$ be a countable discrete group and let $\sigma:G \to \mathrm{Sym}(V)$ be an $(F,\epsilon)$ sofic approximation to $G$. We define a hyperlinear approximation $\alpha:G \to \mathrm{U}(X)$ to be \textbf{induced} by $\sigma$ if there exists an orthonormal basis $\{\zeta_v:v \in V\}$ for $X$ such that $\alpha(g)\zeta_v = \zeta_{\sigma(g)v}$ for all $g \in G$ and $v \in V$.\end{definition}

The above construction clearly implies that every sofic group is hyperlinear, but the validity of the converse implication is a long-standing problem in the field. (See, for example, Open Question 3.4 in \cite{pestov-sofic-survey} where the author speculates a positive answer.) The original article \cite{Gromov:1999aa} introducing soficitiy observed that every amenable group is sofic, so that among amenable groups soficity and hyperlinearity are trivially equivalent. Our goal in the present article is to prove the following theorem, which asserts that this equivalence holds not only on the qualititative level of the group but also a quantitative level among the approximations themselves. In other words, for an amenable group the procedure in Definition \ref{def.sofhyp} is essentially the only way to obtain a hyperlinear approximation.

\begin{theorem} \label{thm} Let $G$ be an amenable countable discrete group, let $F \subseteq G$ be finite and let $\epsilon > 0$. Then there exist a finite subset $K$ of $G$ and $\delta > 0$ depending only on $F$ and $\epsilon$ such that if $\alpha:G \to \mathrm{U}(X)$ is a $(K,\delta)$ hyperlinear approximation to $G$ then there exists a hyperlinear approximation $\omega:G \to \mathrm{U}(X)$ induced by a $(F,\epsilon)$ sofic approximation to $G$ with $||\alpha(g)-\omega(g)||_{\mathrm{HS}} \leq \epsilon$ for all $g \in F$. \end{theorem}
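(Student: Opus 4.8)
The plan is to argue by compactness, converting the desired quantitative statement into a qualitative rigidity statement about a limiting representation. First I would suppose the conclusion fails for some fixed $F$ and $\epsilon$: then for an exhaustion $K_n \nearrow G$ and $\delta_n \downarrow 0$ there are $(K_n,\delta_n)$ hyperlinear approximations $\alpha_n : G \to \mathrm{U}(X_n)$, none of which is $\epsilon$-close on $F$ to a representation induced by an $(F,\epsilon)$ sofic approximation. Fixing a free ultrafilter $\mathcal{U}$, I would form the tracial ultraproduct $\mathcal{M} = \prod_{n \to \mathcal{U}} (B(X_n), \tau_n)$, where $\tau_n$ is the normalized trace, so that $\|\cdot\|_{\mathrm{HS}}$ becomes the $2$-norm of the limit trace $\tau$. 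Asymptotic multiplicativity forces $\pi(g) = (\alpha_n(g))_{\mathcal{U}}$ to be a genuine group homomorphism $\pi : G \to \mathrm{U}(\mathcal{M})$. The separation hypothesis should then be leveraged to show that $\tau(\pi(g)) = 0$ for every $g \neq e$, i.e. that $\pi$ is a copy of the left regular representation and extends to a trace-preserving embedding $\theta : L(G) \hookrightarrow \mathcal{M}$; this identification of the limit with the regular representation is exactly where the separation condition is used. With this in hand the contradiction I am aiming for becomes: \emph{the embedding $\theta$ can be approximated, in $\|\cdot\|_2$ on $F$, by one landing in permutation matrices that form a sofic approximation}, since such an approximation descends to $\mathcal{U}$-many $n$ and contradicts the choice of $\alpha_n$. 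Here I would use that any unitary conjugate of a permutation representation is itself induced by a sofic approximation, simply by rotating the distinguished basis.

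The engine for producing permutations is amenability. By Connes' theorem $L(G)$ is hyperfinite, and I would exploit the resulting rigidity of embeddings: up to unitary conjugacy in $\mathcal{M}$ there is only one trace-preserving embedding of a hyperfinite algebra into a matrix ultraproduct. Since $G$ is amenable it is sofic, so a sofic approximation of $G$ provides a second embedding $\theta_0 : L(G) \hookrightarrow \mathcal{M}$ whose image lies in permutation matrices; arranging the sofic approximation to have the same dimensions $d_n$ (possible as $d_n \to \infty$, by padding or amplifying) places $\theta_0$ in the same ultraproduct. Uniqueness of embeddings then yields a unitary $u \in \mathcal{M}$ with $u \theta(\cdot) u^* = \theta_0(\cdot)$, and lifting $u$ to unitaries $u_n$ gives $\omega_n(g) = u_n^\ast P_{\sigma_n(g)} u_n$, a representation induced by the sofic approximation $\sigma_n$ (which is $(F,\epsilon)$ for $\mathcal{U}$-many $n$) with $\|\alpha_n(g) - \omega_n(g)\|_{\mathrm{HS}} \to 0$ on $F$, the contradiction sought.

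For a self-contained and genuinely quantitative argument, tracking $K$ and $\delta$ explicitly rather than through compactness, I would instead build the permutations by hand from the Følner structure. Using that $\tau(\alpha(g))$ is close to $0$ for $e \neq g \in K$, I would find a unit vector $\xi$ with $\langle \alpha(g)\xi, \xi\rangle$ close to $0$ for $e \neq g \in W$ and close to $1$ for $g=e$, where $W$ is a Følner set; then $\{\alpha(w)\xi : w \in W\}$ is approximately orthonormal and $\alpha(g)$ acts on it by the left-translation partial permutation $w \mapsto gw$. The Ornstein--Weiss quasi-tiling theorem then lets me tile most of $G$, and correspondingly most of $X$, by disjoint translates of finitely many such Følner towers; reading off the translation permutation on the tower interiors and patching the small leftover region to a genuine bijection produces $\sigma(g) \in \mathrm{Sym}(V)$, with $V$ indexing an orthonormal basis of $X$, giving an $(F,\epsilon)$ sofic approximation whose induced $\omega$ is $\epsilon$-close to $\alpha$ on $F$.

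I expect the main obstacle to be this tiling/Rokhlin step and the passage from \emph{approximate} partial permutations of basis vectors to a single \emph{genuine} global permutation. Controlling the accumulated non-orthogonality of the vectors $\alpha(w)\xi$ across many towers, ensuring near-total coverage of $X$, and completing the resulting approximate partial bijections to honest elements of $\mathrm{Sym}(V)$ via a counting or Hall-type matching argument is where the real work lies; passing to the ultraproduct, where orthogonality and Følner invariance become exact, is the standard device for taming these errors, at the cost of replacing the explicit construction by the uniqueness-of-embeddings black box. A secondary point demanding care is the first step above, namely deriving $\tau(\pi(g)) = 0$ for $g \neq e$ from separation: pairwise Hilbert--Schmidt distances control only $\mathrm{Re}\,\tau(\pi(g))$, whereas the conclusion genuinely requires the limit to be the regular representation, so this reduction must be handled with the specific normalizations in force.
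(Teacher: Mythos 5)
Your primary (ultraproduct) argument takes a genuinely different route from the paper's, and as a strategy it is sound. The paper never forms a limit object: it works entirely inside the given space $X$, choosing random unit vectors whose $M$-orbits under $\alpha$ are almost orthonormal (Conditions $(M,\lambda)$-I and II together with Corollary \ref{cor.09-12.2}), orthogonalizing them by Gram--Schmidt with the explicit constant $C_n$ (Proposition \ref{prop.09-15.1}), reading off left translation on a F{\o}lner set as a permutation of each orbit basis (Proposition \ref{prop.09-15.2}), unitarily correcting $\alpha$ on the orthogonal complement (Propositions \ref{prop.09-15.6} and \ref{prop.09-15.3}), and iterating this dichotomy (Lemma \ref{lem}) until the permutation part exhausts a $(1-\epsilon)$-fraction of $X$. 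You replace all of this by negation-plus-compactness and two black boxes: Connes' theorem, giving hyperfiniteness of $L(G)$, and Jung-type uniqueness (up to unitary conjugacy) of trace-preserving embeddings of a separable amenable tracial von Neumann algebra into a matrix ultraproduct; soficity of $G$ supplies the competing permutation-matrix embedding, and lifting the conjugating unitary yields the contradiction. Since the theorem asserts only existence of $(K,\delta)$, the compactness reduction is legitimate, and your observation that a unitary conjugate of a basis-permuting representation is itself sofic-induced (rotate the distinguished basis) is correct. What the soft proof buys is brevity; what it costs is every effective bound, reliance on deep classification theorems, and bookkeeping you only gesture at (padding sofic approximations to live on sets of size exactly $\Delta(X_n)$, and the bounded-dimension case, which can occur when $G$ is finite). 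Your fallback F{\o}lner-tower sketch is in outline the paper's actual proof, but with precisely the hard steps -- orthogonalization error control, exhaustion of the space, patching to genuine permutations -- left unexecuted, so it should be regarded as a plan rather than an argument.

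The one genuine gap is the step you yourself flag: deducing $\tau(\pi(g)) = 0$ for $g \neq e$ from the separation clause. Under the paper's literal definition this deduction is impossible, and in fact the theorem itself fails: the normalized trace is $1$-Lipschitz for $||\cdot||_{\mathrm{HS}}$, and for any sofic-induced $\omega$ the number $\tau(\omega(g))$ is a nonnegative real (the fixed-point fraction of $\sigma(g)$), so no $\alpha$ with $\tau(\alpha(k)) = -1$ can be $\epsilon$-close on $F$ to a sofic-induced map; yet $\alpha(e) = I$, $\alpha(k) = -I$ is a genuine representation of $\mathbb{Z}/2\mathbb{Z}$ with $||\alpha(e)-\alpha(k)||_{\mathrm{HS}} = 2 \geq \sqrt{2}$, hence a $(G,\delta)$-hyperlinear approximation for every $\delta$ in the paper's sense. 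Moreover, the natural two-sided strengthening $\bigl|\,||\alpha(g)-\alpha(h)||_{\mathrm{HS}} - \sqrt{2}\,\bigr| \leq \epsilon$ is still insufficient, because it controls only $\mathrm{Re}\,\tau$: on $\mathbb{Z}/3\mathbb{Z}$ the representation with multiplicities $(\tfrac{1}{3},\tfrac{2}{3},0)$ over the three characters satisfies $\mathrm{Re}\,\tau(\pi(g)) = 0$ for $g \neq e$ (so the two-sided condition holds exactly) but has $\tau(\pi(k)) = i/\sqrt{3}$, hence lies at $||\cdot||_{\mathrm{HS}}$-distance at least $1/\sqrt{3}$ from every sofic-induced map. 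The hypothesis under which both your argument and the theorem become correct is the full trace condition $|\tau(\alpha(h)^\ast\alpha(g))| \leq \epsilon$ for distinct $g,h \in F$ (together with, for the probabilistic step, $\Delta(X)$ large, which a large $K$ forces). You should know that the paper has exactly the same problem: in the proof of Lemma \ref{lem}, the displayed two-sided bound on $\mathbb{E}[||\alpha(g)\boldsymbol{\xi}-\alpha(h)\boldsymbol{\xi}||^2]$ and the inference $\mathbb{E}[|\langle\alpha(g)\boldsymbol{\xi},\alpha(h)\boldsymbol{\xi}\rangle|] \leq \lambda/(8|M|^2)$ do not follow from the stated axioms. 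So your plan is viable, but the phrase ``handled with the specific normalizations in force'' must be cashed out as an explicit strengthening of the separation axiom to the trace condition; with that emendation your ultraproduct proof (and the paper's construction) goes through.
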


\subsection{Application to stability of approximations}

A natural question to ask about sofic or hyperlinear approximations is the extent to which they differ from approximations by genuine actions on finite sets or finite dimensional vector spaces. The next definitions provide a way of investigating this topic.

\begin{definition} Let $G$ be a countable discrete group. 

\begin{itemize}\item Let $\sigma = (\sigma_n:G \to \mathrm{Sym}(V_n))_{n \in \mathbb{N}}$ and $\tau = (\tau_n:G \to \mathrm{Sym}(V_n))_{n \in \mathbb{N}}$ be two sofic approximations to $G$ with the same underlying finite sets. We define $\sigma$ and $\tau$ to be a \textbf{asymptotic distance zero} if for every $g \in G$ we have \[ \lim_{n \to \infty} \frac{1}{|V_n|} |\{v \in V_n:\sigma(g)v \neq \tau(g)v\}| = 0 \] \item Let $\alpha = (\alpha_n:G \to \mathrm{U}(X_n))_{n \in \mathbb{N}}$ and $\beta = (\beta_n:G \to \mathrm{U}(X_n))_{n \in \mathbb{N}}$ be two hyperlinear approximations to $G$ with the same underlying Hilbert spaces. We define $\alpha$ and $\\beta$ to be at \textbf{asymptotic distance zero} if for every $g \in G$ we have \[ \lim_{n \to \infty} ||\alpha(g) - \beta(g)||_{\mathrm{HS}} = 0 \] \end{itemize} \end{definition}

\begin{definition} Let $G$ be a countable discrete group. \begin{itemize}\item Let $\sigma = (\sigma_n:G \to \mathrm{Sym}(V_n))_{n \in \mathbb{N}}$ be a hyperlinear approximation to $G$. We define $\sigma$ to be \textbf{perfect} if each $\sigma_n$ is a genuine group homomorphism. \item Let $\alpha = (\alpha_n:G \to \mathrm{U}(X_n))_{n \in \mathbb{N}}$ be a hyperlinear approximation to $G$. We define $\alpha$ to be \textbf{perfect} if each $\alpha_n$ is a genuine group homomorphism. \end{itemize} \end{definition}

\begin{definition} Let $G$ be a countable discrete group. \begin{itemize}\item We define $G$ to be \textbf{stably sofic} if every sofic approximation to $G$ is at asymptotic distance zero from a perfect sofic approximation to $G$. \item We define $G$ to be \textbf{stably hyperlinear} if every hyperlinear approximation to $G$ is at asymptotic distance zero from a perfect hyperlinear approximation to $G$. \end{itemize} \end{definition}

We note that above notions differ slightly from the concepts of `permutation stability' and `Hilbert-Schmidt stability' used in references such as \cite{MR3350728}, \cite{BECKER2020108298}, \cite{Hadwin2017StabilityOG}, \cite{levit_lubotzky_2022} and \cite{2022arXiv220602268L}, where the objects of study are `approximate homomorphisms' that correspond to dropping the second clause in our definitions of sofic and hyperlinear approximations. Stable soficity in our sense was studied in \cite{MR4105530} and the relationship with stable hyperlinearity is explored in \cite{2022arXiv221110492D}. 

\begin{theorem} \label{thm.2} Let $G$ be an amenable countable discrete group and suppose $G$ is stably sofic. Then $G$ is stably hyperlinear. \end{theorem}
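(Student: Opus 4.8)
The plan is to use Theorem \ref{thm} to replace a given hyperlinear approximation by one induced from a sofic approximation, then invoke the stable soficity hypothesis to perfect the latter, and finally transport this perfection back to the hyperlinear side through the inducing construction of Definition \ref{def.sofhyp}. The entire substantive content is carried by Theorem \ref{thm}; what remains is a matching argument.

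Let $\alpha = (\alpha_m : G \to \mathrm{U}(X_m))_{m \in \mathbb{N}}$ be an arbitrary hyperlinear approximation to $G$, so that $\alpha_m$ is an $(F'_m, \epsilon'_m)$ hyperlinear approximation with $F'_m \uparrow G$ and $\epsilon'_m \to 0$. Fix an exhaustion $F_1 \subseteq F_2 \subseteq \cdots$ of $G$ by finite sets together with $\epsilon_n \downarrow 0$, and for each $n$ let $(K_n, \delta_n)$ be the pair furnished by Theorem \ref{thm} for $(F_n, \epsilon_n)$; crucially $K_n$ and $\delta_n$ depend only on $F_n$ and $\epsilon_n$ and not on any approximation. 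Since $F'_m \uparrow G$ and $\epsilon'_m \to 0$, for each $n$ there is a threshold $M(n)$ so that $\alpha_m$ is a $(K_n, \delta_n)$ hyperlinear approximation for all $m \geq M(n)$. Setting $n(m)$ to be the largest $n$ with $m \geq M(n)$, so that $n(m) \to \infty$, Theorem \ref{thm} produces for each $m \geq M(1)$ an $(F_{n(m)}, \epsilon_{n(m)})$ sofic approximation $\sigma_m : G \to \mathrm{Sym}(V_m)$ whose induced hyperlinear approximation $\omega_m : G \to \mathrm{U}(X_m)$ satisfies $\|\alpha_m(g) - \omega_m(g)\|_{\mathrm{HS}} \leq \epsilon_{n(m)}$ for all $g \in F_{n(m)}$. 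For the finitely many indices $m < M(1)$ we simply set $\omega_m = \mathrm{Id}_{X_m}$, which affects no asymptotic statement. Then $\sigma = (\sigma_m)_{m \geq M(1)}$ is a sofic approximation to $G$, and since $\epsilon_{n(m)} \to 0$ and $F_{n(m)}$ eventually contains any fixed $g$, the sequence $\omega = (\omega_m)$ lies at asymptotic distance zero from $\alpha$ on the same underlying Hilbert spaces.

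Now apply the hypothesis that $G$ is stably sofic to $\sigma$: there is a perfect sofic approximation $\tau = (\tau_m : G \to \mathrm{Sym}(V_m))$ on the same finite sets $V_m$, at asymptotic distance zero from $\sigma$. By Definition \ref{def.sofhyp} there is for each $m \geq M(1)$ an orthonormal basis $\{\zeta^m_v : v \in V_m\}$ of $X_m$ with $\omega_m(g)\zeta^m_v = \zeta^m_{\sigma_m(g)v}$. Using the same basis, define $\pi_m(g)\zeta^m_v = \zeta^m_{\tau_m(g)v}$, and set $\pi_m = \mathrm{Id}_{X_m}$ for $m < M(1)$. Because each $\tau_m$ is a genuine homomorphism, every $\pi_m$ is a genuine homomorphism, so $\pi = (\pi_m)$ is perfect. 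A direct computation using orthonormality gives
\[ \|\omega_m(g) - \pi_m(g)\|_{\mathrm{HS}}^2 = \frac{2}{|V_m|}\,|\{v \in V_m : \sigma_m(g)v \neq \tau_m(g)v\}|, \]
which tends to zero for each fixed $g$ since $\tau$ is at asymptotic distance zero from $\sigma$. Hence $\pi$ is at asymptotic distance zero from $\omega$, and therefore, by the triangle inequality for $\|\cdot\|_{\mathrm{HS}}$, also from $\alpha$. It remains to check that $\pi$ is itself a hyperlinear approximation: the multiplicativity clause holds exactly because each $\pi_m$ is a homomorphism, and for distinct $g,h$ the bound
\[ \|\pi_m(g) - \pi_m(h)\|_{\mathrm{HS}} \geq \|\alpha_m(g) - \alpha_m(h)\|_{\mathrm{HS}} - \|\pi_m(g) - \alpha_m(g)\|_{\mathrm{HS}} - \|\pi_m(h) - \alpha_m(h)\|_{\mathrm{HS}} \]
together with the separation built into $\alpha$ forces $\liminf_m \|\pi_m(g) - \pi_m(h)\|_{\mathrm{HS}} \geq \sqrt{2}$, which after a routine choice of an exhausting sequence of finite sets and tolerances certifies $\pi$ as a hyperlinear approximation. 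Thus every hyperlinear approximation to $G$ is at asymptotic distance zero from a perfect one, so $G$ is stably hyperlinear.

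The only point requiring genuine care is the matching of quantifiers in the second paragraph. Because Theorem \ref{thm} guarantees that $(K_n, \delta_n)$ depends only on $(F_n, \epsilon_n)$, we may first fix the target regime and only afterwards determine how far out along $\alpha$ we must travel, which is exactly what legitimizes the diagonal selection of $n(m)$; had this uniformity failed the argument would not close. Everything else is bookkeeping organized around the observation that the inducing construction of Definition \ref{def.sofhyp}, applied to a fixed orthonormal basis, converts asymptotic distance zero on the sofic side verbatim into asymptotic distance zero on the hyperlinear side.
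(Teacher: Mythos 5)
Your proof is correct and follows essentially the same route as the paper's: diagonalize Theorem \ref{thm} along an exhaustion of $G$ to extract a sofic approximation inducing hyperlinear approximations close to $\alpha$, perfect it using stable soficity, and transport the perfect sofic approximation back through the same orthonormal basis. The only differences are cosmetic bookkeeping—your explicit computation of $\|\omega_m(g)-\pi_m(g)\|_{\mathrm{HS}}$ and your verification of the separation clause for $\pi$ are points the paper's proof leaves implicit, but they do not constitute a different approach.
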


\begin{proof}[Proof of Theorem \ref{thm.2} from Theorem \ref{thm}] Assume $G$ is permutation stable and let $(\alpha_n:G \to \mathrm{U}(X_n))_{n \in \mathbb{N}}$ be a hyperlinear approximation to $G$. Let $(F_n)_{n=1}^\infty$ be an increasing sequence of finite subsets of $G$ whose union is the entire group. For each $n \in \mathbb{N}$, Theorem \ref{thm} allows us to choose a finite subset $K_n$ of $G$ and $\delta_n > 0$ such that if $\beta:G \to \mathrm{U}(Y)$ is a $(K_n,\delta_n)$ hyperlinear approximation to $G$ then there exists a hyperlinear approximation $\gamma:G \to \mathrm{U}(Y)$ to $G$ induced by a $\left(F_n,\frac{1}{n}\right)$ sofic approximation to $G$ with $||\beta(g)-\gamma(g)||_{\mathrm{HS}} \leq \frac{1}{n}$ for all $g \in F_n$. \\
\\
Choose a sequence of natural numbers $m_1<m_2<\cdots$ which increases fast enough that $\alpha_k$ is a $(K_n,\delta_n)$ hyperlinear approximation to $G$ for all $k \in \{m_n,\ldots,m_{n+1}\}$. Then for all $n \in \mathbb{N}$ and all $k \in \{m_n,\ldots,m_{n+1}\}$ there exists an $\left(F_n,\frac{1}{n}\right)$ sofic approximation $\sigma_k:G \to \mathrm{Sym}(V_k)$ to $G$ inducing a hyperlinear approximation $\omega_k:G \to \mathrm{U}(X_k)$ such that \begin{equation} \label{eq.dark-1} ||\alpha_k(g)-\omega_k(g)||_{\mathrm{HS}} \leq \frac{1}{n} \end{equation} for all $g \in F_n$. Since the union of the $F_n$ is equal to $G$, the sequence $(\sigma_k:G \to \mathrm{Sym}(V_k))_{k \in \mathbb{N}}$ is a sofic approximation to $G$. Therefore permutation stability of $G$ implies that there exists a perfect sofic approximation $(\tau_k:G \to \mathrm{Sym}(V_k))_{k \in \mathbb{N}}$ with \begin{equation} \label{eq.dark-2} \lim_{n \to \infty} \frac{1}{|V_k|}|\{v \in V_k:\tau_k(g)v \neq \sigma_k(g)v\|| =0\end{equation} for all $g \in G$. The sofic approximation $\tau_k:G \to \mathrm{Sym}(V_k)$ induces a perfect hyperlinear approximation to $G$ by permuting the same orthonormal basis for $X_k$ as did $\sigma_k$. From (\ref{eq.dark-1}) and (\ref{eq.dark-2}) we see that $\lim_{n \to \infty}||\alpha_k(g)-\omega_k(g)||_{\mathrm{HS}} = 0$ as required. \end{proof}

\subsection{Acknowledgements}

The author thanks Alex Lubotzky for raising the question of whether Theorem \ref{thm.2} is true and Lewis Bowen for helpful comments on the manuscript.

\section{Random vectors and Hilbert-Schmidt norms}

\begin{proposition} \label{prop.09-11.3} Let $X$ be a finite dimensional Hilbert space, let $a$ be a linear operator on $X$ and let $\boldsymbol{\xi}$ be a uniform random element of the unit sphere of $X$. Then we have $\mathbb{E}[||a\boldsymbol{\xi}||^2] = ||a||_{\mathrm{HS}}^2$ \end{proposition}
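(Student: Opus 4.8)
The plan is to express the random quantity $\|a\boldsymbol{\xi}\|^2$ as the trace of a fixed operator against a random rank-one projection, and then reduce everything to computing a single expectation using the rotational symmetry of the uniform measure on the sphere. Writing $n = \Delta(X)$, I would begin from the identity $\|a\boldsymbol{\xi}\|^2 = \langle a^\ast a\boldsymbol{\xi},\boldsymbol{\xi}\rangle = \mathrm{trace}(a^\ast a\, P_{\boldsymbol{\xi}})$, where $P_{\boldsymbol{\xi}}$ denotes the orthogonal projection onto the line spanned by the unit vector $\boldsymbol{\xi}$ (that is, the rank-one operator $\eta \mapsto \langle \eta, \boldsymbol{\xi}\rangle \boldsymbol{\xi}$); the second equality follows by evaluating the trace in an orthonormal basis whose first element is $\boldsymbol{\xi}$. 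Since $X$ is finite dimensional, the trace is a continuous linear functional and expectation may be exchanged with it, giving $\mathbb{E}[\|a\boldsymbol{\xi}\|^2] = \mathrm{trace}(a^\ast a\, M)$ where $M = \mathbb{E}[P_{\boldsymbol{\xi}}]$ is a fixed positive operator on $X$.

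The heart of the argument is to show $M = \frac{1}{n} I$. Here I would use that the uniform measure on the unit sphere is invariant under the unitary group: for every $u \in \mathrm{U}(X)$ the random vector $u\boldsymbol{\xi}$ has the same distribution as $\boldsymbol{\xi}$, and since $P_{u\boldsymbol{\xi}} = u\, P_{\boldsymbol{\xi}}\, u^\ast$, taking expectations yields $u M u^\ast = M$ for every $u$. Thus $M$ commutes with every unitary, hence with every operator on $X$ (as each operator is a linear combination of unitaries), so $M$ is a scalar multiple of the identity. Computing its trace, $\mathrm{trace}(M) = \mathbb{E}[\mathrm{trace}(P_{\boldsymbol{\xi}})] = \mathbb{E}[\|\boldsymbol{\xi}\|^2] = 1$, fixes the scalar as $\frac{1}{n}$. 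Substituting back gives $\mathbb{E}[\|a\boldsymbol{\xi}\|^2] = \mathrm{trace}\!\left(a^\ast a \cdot \tfrac{1}{n} I\right) = \frac{1}{n}\,\mathrm{trace}(a^\ast a) = \|a\|_{\mathrm{HS}}^2$, as desired.

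I expect the only delicate point to be the justification that $M$ is scalar. The cleanest route is the invariance argument above together with the standard fact that the commutant of $\mathrm{U}(X)$ consists only of scalars (equivalently, Schur's lemma applied to the irreducible action of $\mathrm{U}(X)$ on $X$). If one prefers a bare-hands version, one may instead fix an orthonormal basis, expand $\boldsymbol{\xi} = \sum_i \xi_i e_i$, and show directly that $\mathbb{E}[\overline{\xi_i}\xi_j] = \frac{1}{n}\delta_{ij}$ by applying the measure-preserving coordinate phase rotations $\xi_i \mapsto e^{\mathrm{i}\theta}\xi_i$ to annihilate the off-diagonal terms, and using symmetry among the coordinates together with $\sum_i \mathbb{E}[|\xi_i|^2] = 1$ for the diagonal ones. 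Both approaches are routine; the invariance formulation has the advantage of avoiding any choice of basis.
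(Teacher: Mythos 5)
Your proof is correct, but it takes a genuinely different route from the paper's. The paper reduces to a diagonal operator via the singular value decomposition: writing $v^\ast a u = b$ with $b$ diagonal, it uses the $u$-invariance of the distribution of $\boldsymbol{\xi}$ and the unitary invariance of the norm to get $\mathbb{E}[\|a\boldsymbol{\xi}\|^2] = \mathbb{E}[\|b\boldsymbol{\xi}\|^2]$, then expands in coordinates and uses the rotational symmetry $\mathbb{E}[|\langle \boldsymbol{\xi},\zeta_k\rangle|^2] = \frac{1}{d}$ to land on $\frac{1}{d}\sum_k |s_k|^2 = \|a\|_{\mathrm{HS}}^2$. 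You bypass the SVD altogether: the identity $\|a\boldsymbol{\xi}\|^2 = \mathrm{trace}(a^\ast a\, P_{\boldsymbol{\xi}})$ concentrates all the randomness into the second-moment operator $M = \mathbb{E}[P_{\boldsymbol{\xi}}]$, and the conjugation invariance $uMu^\ast = M$ together with Schur's lemma (or the fact that every operator is a linear combination of unitaries) forces $M = \frac{1}{n}I$. Both arguments rest on the same underlying ingredient---unitary invariance of the uniform measure on the sphere---but your packaging is basis-free and isolates a reusable fact (the second-moment operator of any unitarily invariant distribution of unit vectors is $\frac{1}{n}I$), whereas the paper's computation is more elementary, invoking no commutant or irreducibility input, and ties directly into the definition of $\|a\|_{\mathrm{HS}}$ through the singular values. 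Your argument is complete as sketched; the one point to spell out in a final write-up is the justification that an operator commuting with all of $\mathrm{U}(X)$ is scalar, for which either of your two suggested routes suffices.
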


\begin{proof}[Proof of Proposition \ref{prop.09-11.3}] Write $d = \Delta(X)$ and let $\zeta_1,\ldots,\zeta_d$ be an orthonormal basis for $X$. According to the singular value decomposition we can find an operator $b$ which is diagonal in this basis and unitary operators $u$ and $v$ such that $v^\ast a u = b$. We have \begin{equation} \label{eq.09-11.8} \mathbb{E}[||a \boldsymbol{\xi}||^2] = \mathbb{E}[||au\boldsymbol{\xi}||^2] = \mathbb{E}[||v^{\ast}au \boldsymbol{\xi}||^2] = \mathbb{E}[||b\boldsymbol{\xi}||^2] \end{equation} where the first equality in the previous display holds since the distribution of $\boldsymbol{\xi}$ is $u$-invariant and the second equality holds since the norm $|| \cdot||$ is $v^{\ast}$-invariant.\\
\\
Write $s_1,\ldots,s_d$ for the diagonal entries of $b$, which are the singular values of $a$. Using (\ref{eq.09-11.8}) we have \begin{align} \mathbb{E}[||a\boldsymbol{\xi}||^2] &= \mathbb{E}[||b\boldsymbol{\xi}||^2] \label{eq.09-16.1} \\ & = \mathbb{E}\left[ \sum_{k=1}^d |\langle b\boldsymbol{\xi},\zeta_k\rangle|^2 \right] \nonumber \\ & = \sum_{k=1}^d \mathbb{E}[|\langle b \boldsymbol{\xi},\zeta_k\rangle|^2] \label{eq.09-16.2} \\ & = \sum_{k=1}^d \mathbb{E}[|\langle \boldsymbol{\xi},b\zeta_k\rangle|^2] \label{eq.09-16.3} \\ & = \sum_{k=1}^d \mathbb{E}[|\langle \boldsymbol{\xi},s_k\zeta_k\rangle|^2] \nonumber \\ & = \sum_{k=1}^d |s_k|^2\mathbb{E}[|\langle \boldsymbol{\xi},\zeta_k\rangle|^2] \label{eq.well} \end{align} Here, (\ref{eq.09-16.3}) follows from (\ref{eq.09-16.2}) since the diagonal matrix $b$ in the singular value decomposition of $a$ has nonnegative entries.\\ 
\\
Since the distribution of $\boldsymbol{\xi}$ is uniform, we have $\mathbb{E}[|\langle \boldsymbol{\xi},\kappa\rangle|] = \mathbb{E}[|\langle \boldsymbol{\xi},\eta\rangle|]$ for all unit vectors $\kappa,\eta \in X$. Since we have \[ 1 = \mathbb{E}\left[ \sum_{k=1}^d |\langle \boldsymbol{\xi},\zeta_k\rangle|^2 \right] \] it follows that $\mathbb{E}[|\langle \boldsymbol{\xi},\zeta_k\rangle|^2] = \frac{1}{d}$ for all $k \in \{1,\ldots,d\}$ Thus from (\ref{eq.well}) we obtain \[ \mathbb{E}[||a\boldsymbol{\xi}||^2] = \frac{1}{d}\sum_{k=1}^d |s_k|^2  \] Since $s_1,\ldots,s_d$ are the singular values of $a$, the last expression is equal to $||a||_{\mathrm{HS}}^2$ and we obtain $\mathbb{E}[||a\boldsymbol{\xi}||^2] = ||a||_{\mathrm{HS}}^2$. \end{proof}

By applying Jensen's inequality to Proposition \ref{prop.09-11.3} we obtain $\mathbb{E}[||a\boldsymbol{\xi}||] \leq ||a||_{\mathrm{HS}}$ and then from Markov's inequality we obtain the following corollary.

\begin{corollary}\label{cor.09-12.1} Let $X$ be a finite dimensional Hilbert space, let $a$ be a linear operator on $X$ and let $\boldsymbol{\xi}$ be a uniform random element of the unit sphere of $X$. Then for any $c>0$ we have \[ \mathbb{P}\left[ ||a\boldsymbol{\xi}|| > c||a||_{\mathrm{HS}} \right] \leq \frac{1}{c}      \] \end{corollary}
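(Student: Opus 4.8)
The plan is to follow exactly the route signposted before the statement: first upgrade the second-moment identity of Proposition \ref{prop.09-11.3} to a first-moment bound via Jensen's inequality, and then convert that bound into a tail estimate via Markov's inequality. First I would apply Jensen's inequality to the concave function $t \mapsto \sqrt{t}$ on $[0,\infty)$, evaluated at the nonnegative random variable $||a\boldsymbol{\xi}||^2$, to obtain
\[ \mathbb{E}[||a\boldsymbol{\xi}||] = \mathbb{E}\left[\sqrt{||a\boldsymbol{\xi}||^2}\right] \leq \sqrt{\mathbb{E}[||a\boldsymbol{\xi}||^2]} = ||a||_{\mathrm{HS}}, \]
where the final equality is precisely the content of Proposition \ref{prop.09-11.3}.

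Next I would dispose of the degenerate case $||a||_{\mathrm{HS}} = 0$ separately, since it is exactly the case in which the threshold $c||a||_{\mathrm{HS}}$ vanishes and Markov's inequality cannot be divided through. When $||a||_{\mathrm{HS}} = 0$ we have $a = 0$, so $||a\boldsymbol{\xi}|| = 0$ with probability one and the event $\{||a\boldsymbol{\xi}|| > c||a||_{\mathrm{HS}}\}$ coincides with $\{||a\boldsymbol{\xi}|| > 0\}$, which has probability zero and is therefore at most $\frac{1}{c}$. In the remaining case $||a||_{\mathrm{HS}} > 0$, I would apply Markov's inequality to the nonnegative random variable $||a\boldsymbol{\xi}||$ at the strictly positive threshold $c||a||_{\mathrm{HS}}$, obtaining
\[ \mathbb{P}\left[||a\boldsymbol{\xi}|| > c||a||_{\mathrm{HS}}\right] \leq \frac{\mathbb{E}[||a\boldsymbol{\xi}||]}{c||a||_{\mathrm{HS}}} \leq \frac{||a||_{\mathrm{HS}}}{c||a||_{\mathrm{HS}}} = \frac{1}{c}, \]
which is the desired conclusion.

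I expect no substantial obstacle here, as all the genuine work is already carried out in Proposition \ref{prop.09-11.3}; the corollary is a soft consequence. The only two points warranting any care are applying Jensen's inequality in the correct direction, which requires using the \emph{concavity} of the square root so that the inequality produces an upper bound on the first moment rather than a lower bound, and treating the case $a = 0$ so that the quotient appearing in Markov's inequality is well defined.
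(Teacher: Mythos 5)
Your proposal is correct and follows exactly the paper's own argument: Jensen's inequality applied to Proposition \ref{prop.09-11.3} gives $\mathbb{E}[||a\boldsymbol{\xi}||] \leq ||a||_{\mathrm{HS}}$, and Markov's inequality then yields the tail bound. Your explicit treatment of the degenerate case $||a||_{\mathrm{HS}} = 0$ is a small point of care that the paper leaves implicit, but it does not change the substance of the argument.
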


We may also observe that if $u_1,\ldots,u_n$ are unitary operators on $X$ then the distribution of $u_k \boldsymbol{\xi}$ is the same as the distribution of $\boldsymbol{\xi}$ for all $k \in \{1,\ldots,n\}$. Therefore we have \[ \mathbb{E}\left[ \frac{1}{n} \sum_{k=1}^n ||au_k\boldsymbol{\xi}||^2 \right] = \frac{1}{n} \sum_{k=1}^n \mathbb{E}[||au_k\boldsymbol{\xi}||^2]= \frac{1}{n} \sum_{k=1}^n \mathbb{E}[||a\boldsymbol{\xi}||^2] = ||a||_{\mathrm{HS}}^2 \] Therefore Jensen's inequality implies \[ \frac{1}{n} \sum_{k=1}^n\mathbb{E}[||au_k\boldsymbol{\xi}||] \leq ||a||_{\mathrm{HS}} \]

By applying Markov's inequality to the last display we obtain the following additional corollary.

\begin{corollary} \label{cor.09-12.2} Let $X$ be a finite dimensional Hilbert space, let $p$ be a projection on $X$ and let $u_1,\ldots,u_n$ be unitary operators on $X$. Then for any $c>0$ we have \[ \mathbb{P}\left[ \frac{1}{n} \sum_{k=1}^n ||p u_k \boldsymbol{\xi}||^2 > c\frac{ \mathrm{tr}(p)}{\Delta(X)} \right] \leq \frac{1}{c} \] \end{corollary}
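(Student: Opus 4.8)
The plan is to specialize the second-moment identity derived immediately before the statement to the case where the operator is the projection $p$, and then to apply Markov's inequality. Recall that the display just above the corollary records that for any linear operator $a$ on $X$ and any unitaries $u_1,\ldots,u_n$ one has
\[ \mathbb{E}\left[ \frac{1}{n}\sum_{k=1}^n ||a u_k \boldsymbol{\xi}||^2 \right] = ||a||_{\mathrm{HS}}^2, \]
since each $u_k$ preserves the distribution of $\boldsymbol{\xi}$ and Proposition \ref{prop.09-11.3} gives $\mathbb{E}[||a u_k \boldsymbol{\xi}||^2] = \mathbb{E}[||a \boldsymbol{\xi}||^2] = ||a||_{\mathrm{HS}}^2$ for each $k$.

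First I would compute the Hilbert-Schmidt norm of the projection. Since $p$ is a projection we have $p^\ast = p$ and $p^2 = p$, so that $p^\ast p = p$ and
\[ ||p||_{\mathrm{HS}}^2 = \frac{1}{\Delta(X)}\,\mathrm{trace}(p^\ast p) = \frac{\mathrm{tr}(p)}{\Delta(X)}. \]
Substituting $a = p$ into the identity above then yields
\[ \mathbb{E}\left[ \frac{1}{n}\sum_{k=1}^n ||p u_k \boldsymbol{\xi}||^2 \right] = \frac{\mathrm{tr}(p)}{\Delta(X)}. \]

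To finish, I would write $Y = \frac{1}{n}\sum_{k=1}^n ||p u_k \boldsymbol{\xi}||^2$, a nonnegative random variable with $\mathbb{E}[Y] = \mathrm{tr}(p)/\Delta(X)$, and apply Markov's inequality at the threshold $t = c\,\mathbb{E}[Y]$ to obtain
\[ \mathbb{P}\left[ Y > c\frac{\mathrm{tr}(p)}{\Delta(X)} \right] \leq \frac{\mathbb{E}[Y]}{c\,\mathbb{E}[Y]} = \frac{1}{c}, \]
which is exactly the claim. The argument is essentially mechanical and I do not expect a genuine obstacle. The two points meriting a moment's attention are the elementary identity $||p||_{\mathrm{HS}}^2 = \mathrm{tr}(p)/\Delta(X)$, which relies on $p^\ast p = p$, and the observation that the relevant input is the second-moment expectation identity rather than the first-moment (Jensen) inequality displayed just before the corollary, since the quantity in the statement involves the squares $||p u_k \boldsymbol{\xi}||^2$. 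The degenerate case $p = 0$ causes no trouble: there $\mathrm{tr}(p) = 0$ and $Y = 0$ almost surely, so the probability on the left is zero and the bound holds trivially.
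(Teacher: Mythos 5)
Your proof is correct and follows the route the paper intends: specialize the displayed expectation identity $\mathbb{E}\bigl[\frac{1}{n}\sum_{k=1}^n ||au_k\boldsymbol{\xi}||^2\bigr] = ||a||_{\mathrm{HS}}^2$ to $a = p$, use $||p||_{\mathrm{HS}}^2 = \mathrm{tr}(p)/\Delta(X)$, and apply Markov's inequality. In fact your version is slightly more careful than the paper's prose, which says the corollary follows ``by applying Markov's inequality to the last display'' --- i.e.\ to the Jensen-derived first-moment bound $\frac{1}{n}\sum_{k=1}^n \mathbb{E}[||au_k\boldsymbol{\xi}||] \leq ||a||_{\mathrm{HS}}$; since the corollary concerns the averaged \emph{squared} norms, and the average of squares dominates the square of the average, Markov must be applied to the second-moment identity exactly as you do, so your reading repairs a small imprecision in the paper's pointer.
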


In the above corollary and throughout the article, we implicitly assume that the term `projection' refers exclusively to orthogonal projections. If $p$ and $q$ are projections, we write $p \vee q$ for the minimal projection such that $p \vee q \geq p$ and $p \vee q \geq q$. We may define $p \vee q$ explicitly as $p+(I-p)q(I-p) = q+(I-q)p(I-q)$. Note that for any vector $\xi$ we have \[ ||(p \vee q)\xi|| \leq ||p\xi||+||q\xi|| \]

\section{Preliminary results on hyperlinear approximations} \label{sec.prelim}

Throughout Section \ref{sec.prelim} we fix a nonzero finite dimensional Hilbert space $X$, a countable discrete group $G$ and a map $\alpha:G \to \mathrm{U}(X)$.

\subsection{Orthogonalizing approximately orthogonal vectors}

Write $C_n = \sqrt{n}(8n)^{2n}$

\begin{proposition}\label{prop.09-15.1} Let $n \in \mathbb{N}$, let $\lambda > 0$ and let $\xi_1,\ldots,\xi_n$ be a family of unit vectors in $X$ such that $|\langle \xi_j,\xi_k \rangle| \leq \lambda$ for all $j,k \in \{1,\ldots,n\}$. For $j \in \{1,\ldots,n\}$ write $p_j$ for the projection onto the span of $\xi_1,\ldots,\xi_j$. Then for $j \in \{1,\ldots,n-1\}$ we have $||p_j \xi_{j+1}|| \leq C_n\lambda$. \end{proposition}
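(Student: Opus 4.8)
The plan is to express the projection explicitly through the Gram matrix of the vectors $\xi_1,\dots,\xi_n$ and to exploit that near-orthogonality keeps this matrix close to the identity, hence well-conditioned. (I read the hypothesis as $|\langle \xi_j,\xi_k\rangle| \le \lambda$ for \emph{distinct} $j,k$, since for $j=k$ it would force $\lambda \ge 1$.) First I would dispose of the regime where $\lambda$ is not small. If $\lambda \geq \frac{1}{2n}$, then $C_n\lambda = \sqrt{n}(8n)^{2n}\lambda \geq \frac{(8n)^{2n}}{2\sqrt n} \geq 1 \geq \|p_j\xi_{j+1}\|$, the last inequality because $p_j$ is a projection and $\xi_{j+1}$ is a unit vector, so the conclusion is immediate. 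Thus I may assume $\lambda < \frac{1}{2n}$ for the rest of the argument.

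Next I would set up the normal equations. Fix $j \le n-1$. Since $p_j\xi_{j+1}$ lies in the span of $\xi_1,\dots,\xi_j$, write $p_j\xi_{j+1} = \sum_{i=1}^j c_i\xi_i$, and use that $\xi_{j+1}-p_j\xi_{j+1}$ is orthogonal to each $\xi_k$ with $k \le j$ to obtain $Gc = b$, where $G$ is the $j \times j$ Gram matrix $G_{ki} = \langle \xi_i,\xi_k\rangle$ and $b_k = \langle \xi_{j+1},\xi_k\rangle$. Since $p_j$ is a self-adjoint idempotent,
\[ \|p_j\xi_{j+1}\|^2 = \langle p_j\xi_{j+1},\xi_{j+1}\rangle = \sum_{i=1}^j c_i\langle \xi_i,\xi_{j+1}\rangle, \]
whose modulus is at most $\|c\|_2\,\|b\|_2$ by Cauchy--Schwarz, with $\|b\|_2^2 = \sum_k |\langle \xi_{j+1},\xi_k\rangle|^2 \le j\lambda^2$. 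So everything reduces to bounding $\|c\|_2 = \|G^{-1}b\|_2 \le \|G^{-1}\|_{\mathrm{op}}\,\|b\|_2$.

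The key step, and the place where the hypothesis really does its work, is the estimate on $\|G^{-1}\|_{\mathrm{op}}$. The matrix $G - I$ is Hermitian with vanishing diagonal and off-diagonal entries of modulus at most $\lambda$, so the elementary bound $\|M\|_{\mathrm{op}} \le \max_k \sum_i |M_{ki}|$ for Hermitian $M$ gives $\|G - I\|_{\mathrm{op}} \le (j-1)\lambda < n \cdot \frac{1}{2n} = \frac12$. Hence $G$ is invertible and a Neumann series yields $\|G^{-1}\|_{\mathrm{op}} \le (1 - (j-1)\lambda)^{-1} < 2$. Combining the displays gives
\[ \|p_j\xi_{j+1}\|^2 \le \|G^{-1}\|_{\mathrm{op}}\,\|b\|_2^2 \le 2j\lambda^2 \le 2n\lambda^2, \]
so $\|p_j\xi_{j+1}\| \le \sqrt{2n}\,\lambda \le C_n\lambda$, as required.

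The only genuine obstacle is controlling the inverse Gram matrix, i.e. guaranteeing that the vectors are not merely pairwise but collectively far from degenerate; this is exactly what forces the case split on the size of $\lambda$ and what the factor $(1-(j-1)\lambda)^{-1}$ encodes. The resulting constant $\sqrt{2n}$ is far smaller than $C_n$, which is harmless since the statement only demands a bound of the form $C_n\lambda$. A reader preferring an inductive Gram--Schmidt argument that tracks the successive normalizing factors $\|\xi_i - p_{i-1}\xi_i\|$ would reach the same conclusion with a larger but still admissible constant.
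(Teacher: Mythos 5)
Your proof is correct, and it takes a genuinely different route from the paper's. The paper runs the Gram--Schmidt procedure on $\xi_1,\ldots,\xi_n$ and proves by induction that each orthonormalized vector $\zeta_j$ satisfies $\|\zeta_j-\xi_j\|\leq (8n)^{2j}\lambda$, the exponential loss coming from propagating the error through successive normalizations; it then expands $\|p_j\xi_{j+1}\|^2$ in the orthonormal basis $\zeta_1,\ldots,\zeta_j$ to land exactly on the constant $C_n=\sqrt{n}(8n)^{2n}$. You instead work with the normal equations: writing $p_j\xi_{j+1}=\sum_i c_i\xi_i$, observing $Gc=b$ for the Gram matrix $G$, and controlling $\|G^{-1}\|_{\mathrm{op}}$ by the Neumann series since $\|G-I\|_{\mathrm{op}}\leq (j-1)\lambda<\tfrac12$ in the nontrivial regime $\lambda<\tfrac{1}{2n}$. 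This buys two things: a polynomial constant $\sqrt{2n}$ in place of the exponential $C_n$ (harmless either way, since the statement only asks for $C_n\lambda$), and a cleaner treatment of large $\lambda$ --- your explicit case split disposes of the regime where the Gram matrix could degenerate, whereas the paper's Gram--Schmidt argument implicitly needs the normalizing denominators $t_j$ to stay positive, which is only guaranteed when $\lambda$ is small (the statement being trivial otherwise, as you note). Your reading of the hypothesis as applying to \emph{distinct} pairs $j,k$ is also the intended one, and is what the paper's own proof uses. One presentational nit: you invoke the representation $c=G^{-1}b$ before establishing invertibility of $G$; strictly one should say that some coefficient vector $c$ exists because $p_j\xi_{j+1}$ lies in the span, that any such $c$ satisfies $Gc=b$, and that once $G$ is shown invertible this pins down $c=G^{-1}b$ --- but this is an ordering issue, not a gap.
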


\begin{proof}[Proof of Proposition \ref{prop.09-15.1}] We can perform the Gram-Schmidt procedure to define a family of vectors $\zeta_1 = \xi_1$ and \[ \zeta_j = \frac{\xi_j - \langle \xi_j,\zeta_{j-1}\rangle\zeta_{j-1} - \cdots - \langle \xi_j,\zeta_1\rangle \zeta_1}{||\xi_j - \langle \xi_j,\zeta_{j-1}\rangle\zeta_{j-1} - \cdots - \langle \xi_j,\zeta_1\rangle \zeta_1||}  \] for $j \in \{2,\ldots,n\}$. We make the following claim.

\begin{claim} \label{claim.1} For all $j \in \{1,\ldots,n\}$ we have $||\zeta_j - \xi_j|| \leq (8n)^{2j}\lambda$. \end{claim}

We verify Claim \ref{claim.1} by induction on $j$. The claim is trivial for $j = 1$, so let $j \in \{2,\ldots,n\}$ and assume we have shown that $||\zeta_k - \xi_k|| \leq \lambda (8n)^k$ for all $k \in \{1,\ldots,j-1\}$. Then for $k \in \{1,\ldots,j-1\}$ we have \begin{equation} \label{eq.09-15.1} |\langle \xi_j, \zeta_k \rangle| \leq ||\zeta_k - \xi_k|| + |\langle \xi_j, \xi_k \rangle| \leq \lambda (8n)^k + \lambda \leq 2 (8n)^k \lambda \end{equation} Therefore \[ ||\langle \xi_j,\zeta_{j-1}\rangle \zeta_{j-1}+\cdots+\langle \xi_j,\zeta_1 \rangle \zeta_1|| \leq |\langle \xi_j,\zeta_{j-1}\rangle|+\cdots+|\langle \xi_j,\zeta_1\rangle| \leq 2(j-1)(8n)^{j-1} \lambda \]

Write \[ t_j = ||\xi_j - \langle \xi_j,\zeta_{j-1}\rangle\zeta_{j-1} - \cdots - \langle \xi_j,\zeta_1\rangle \zeta_1|| \] so that \[ 1 \geq t_j \geq 1- 2(j-1)(8n)^{j-1}\lambda \]

It follows that \begin{align*} ||\zeta_j - \xi_j|| &= \left \vert \left \vert \xi_j -\frac{\xi_j - \langle \xi_j,\zeta_{j-1}\rangle\zeta_{j-1} - \cdots - \langle \xi_j,\zeta_1\rangle \zeta_1}{t_j} \right \vert \right \vert \\ & = \frac{1}{t_j}||t_j\xi_j - \xi_j + \langle \xi_j,\zeta_{j-1}\rangle\zeta_{j-1} + \cdots + \langle \xi_j,\zeta_1\rangle \zeta_1|| \\ & \leq \frac{2(j-1)(8n)^{j-1}\lambda}{1-2(j-1)(8n)^{j-1}\lambda} \leq 8(j-1)^2(8n)^{2j-2} \lambda \leq (8n)^{2j}\lambda \end{align*} This verifies Claim \ref{claim.1}. Therefore (\ref{eq.09-15.1}) is valid for all $k \in \{1,\ldots,n\}$ and so we obtain \[ ||p_j\xi_{j+1}||^2 = |\langle \xi_{j+1},\zeta_j\rangle|^2 + \cdots + |\langle \xi_{j+1},\zeta_1\rangle|^2 \leq n(8n)^{4n}\lambda^2 \] This completes the proof of Proposition \ref{prop.09-15.1}. \end{proof}

\subsection{Orthogonalization in hyperlinear approximations}

Let $M$ be a finite subset of $G$ and let $\lambda > 0$. Consider the following pair of conditions on a vector $\xi \in X$.

\begin{description} \item[Condition $(M,\lambda)$-I] For all distinct pairs $g,h \in M$ we have $|\langle \alpha(g)\xi,\alpha(h)\xi \rangle| \leq \lambda$. \item[Condition $(M,\lambda)$-II] For all $g,h \in M$ we have $||(\alpha(gh) - \alpha(g)\alpha(h))\xi|| \leq \lambda$. \end{description}

\begin{proposition} \label{prop.09-13.1} Let $\rho \geq 0$ and $\lambda > 0$, let $M$ be a finite subset of $G$ and let $\xi \in X$ be a unit vector satisfying Condition $(M,\lambda)$-I.  Let $p$ be a projection on $X$ such that \begin{equation} \label{eq.09-13.1} \frac{1}{|M|} \sum_{g \in M}||p\alpha(g)\xi|| \leq \rho \end{equation} Then there exists a set $M' \subseteq M$ with $|M'| \geq (1-\sqrt{\rho})|M|$ and an orthonormal basis $(\zeta_g)_{g \in M'}$ for the span of $\{(I-p)\alpha(g)\xi:g \in M\}$ such that \[ \frac{1}{|M|} \sum_{g \in M}||\alpha(g)\xi - \zeta_g|| \leq 4(\sqrt{\rho} + C_{|M|}\lambda) \] \end{proposition}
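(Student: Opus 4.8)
The plan is to pass to a large subset of $M$ on which $p$ is uniformly small, and then to orthonormalise in two stages: a Gram--Schmidt stage that absorbs the non-orthogonality measured by $\lambda$, followed by a single \emph{global} orthogonalisation that moves the resulting vectors into $\mathrm{range}(I-p)$ with an error of order $\sqrt{\rho}$ rather than $C_{|M|}\rho$. First I would write $\eta_g = \alpha(g)\xi$ and apply Markov's inequality to the hypothesis $\frac{1}{|M|}\sum_{g \in M}||p\eta_g|| \le \rho$: setting $M' = \{g \in M : ||p\eta_g|| \le \sqrt{\rho}\}$ gives $|M'| \ge (1-\sqrt{\rho})|M|$, and for $g \in M'$ we have $||\eta_g - (I-p)\eta_g|| = ||p\eta_g|| \le \sqrt{\rho}$. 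It is convenient to dispose of the degenerate range at once: if $C_{|M|}\lambda \ge \tfrac14$ or $\sqrt{\rho} \ge \tfrac12$ then $4(\sqrt{\rho}+C_{|M|}\lambda)$ already exceeds the diameter $2$ of the unit sphere, so I may assume both quantities are small; this smallness is also what guarantees, via Condition $(M,\lambda)$-I and a Gershgorin-type estimate on the Gram matrix, that the $\eta_g$ are linearly independent.

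For the first stage, Condition $(M,\lambda)$-I says the $\eta_g$ are unit vectors with $|\langle \eta_g,\eta_h\rangle| \le \lambda$ for distinct $g,h$, so Proposition \ref{prop.09-15.1} applies to any enumeration of them. Running Gram--Schmidt produces an orthonormal family $(\zeta_g^0)_{g \in M}$, and the bound $||p_j\eta_{j+1}|| \le C_{|M|}\lambda$ supplied by that proposition, combined with the elementary comparison between a unit vector, its residual off the previous span, and the normalisation of that residual, gives the per-element estimate $||\zeta_g^0 - \eta_g|| \le 2C_{|M|}\lambda$. Consequently $||p\zeta_g^0|| \le ||p\eta_g|| + 2C_{|M|}\lambda$, and since $||p\eta_g||^2 \le ||p\eta_g||$ we obtain
\[ \sum_{g \in M} ||p\zeta_g^0||^2 \le 2\sum_{g\in M}||p\eta_g||^2 + 8|M|\,C_{|M|}^2\lambda^2 \le \big(2\rho + 8C_{|M|}^2\lambda^2\big)|M|. \]

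For the second stage, let $V$ be the span of the $\zeta_g^0$ (equivalently of the $\eta_g$) and let $U = (I-p)V = \mathrm{span}\{(I-p)\eta_g : g \in M\}$ be the target subspace. The dimension drop is controlled by a trace estimate: if $Q$ is the projection onto $V$ then $\dim(V \cap \mathrm{range}\,p) \le \mathrm{tr}(QpQ) = \sum_{g\in M}||p\zeta_g^0||^2$, which by the previous display stays below $\sqrt{\rho}\,|M|$ in the nontrivial range, so $\dim U \ge (1-\sqrt{\rho})|M|$. I would then let $M' \subseteq M$ index a subfamily of $\{(I-p)\eta_g\}$ forming a basis of $U$, take $V' = \mathrm{span}\{\zeta_g^0 : g\in M'\}$ with $(I-p)\!\upharpoonright_{V'}$ injective onto $U$, and form the polar decomposition $B = W|B|$ of $B = (I-p)\!\upharpoonright_{V'}\colon V' \to U$, setting $\zeta_g = W\zeta_g^0$. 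This $(\zeta_g)_{g\in M'}$ is orthonormal, lies in $\mathrm{range}(I-p)$, and is a basis of $U$. Writing $\iota$ for the inclusion $V'\hookrightarrow X$, the standard perturbation bound for the unitary part of an operator near an isometry gives $\sum_{g\in M'}||\zeta_g - \zeta_g^0||^2 = ||W - \iota||_{\mathrm{HS}}^2 \lesssim ||p\!\upharpoonright_{V'}||_{\mathrm{HS}}^2 = \sum_{g\in M'}||p\zeta_g^0||^2$, and Cauchy--Schwarz together with the displayed bound converts this into $\frac{1}{|M|}\sum_{g\in M'}||\zeta_g - \zeta_g^0|| \lesssim \sqrt{\rho} + C_{|M|}\lambda$. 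Combining with the first-stage bound $\frac{1}{|M|}\sum_{g\in M'}||\zeta_g^0 - \eta_g|| \le 2C_{|M|}\lambda$ and chasing constants yields the claimed $4(\sqrt{\rho}+C_{|M|}\lambda)$.

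The step I expect to be the main obstacle is precisely this second stage, and the difficulty is twofold. One must certify that an orthonormal basis indexed by a set of size $\ge (1-\sqrt{\rho})|M|$ exists at all, that is, control $\dim U$; and one must orthonormalise into $\mathrm{range}(I-p)$ with error $\sqrt{\rho}$ rather than $C_{|M|}\rho$. A naive iterated Gram--Schmidt applied directly to the projected vectors $(I-p)\eta_g$ has off-diagonal overlaps only of size $\lambda + \rho$, but it reintroduces the Gram--Schmidt constant and so yields a useless error of order $C_{|M|}\rho$; this is exactly why I first orthonormalise to the $\zeta_g^0$ and then apply a single global polar orthogonalisation, whose perturbation bound is linear in $||p\!\upharpoonright_{V'}||_{\mathrm{HS}}$ and carries no factor of $C_{|M|}$. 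Establishing the well-definedness of that orthogonalisation — the injectivity of $(I-p)$ on $V'$, equivalently that no direction of $V'$ lies in $\mathrm{range}\,p$ — together with the matching dimension count from the trace estimate, is the genuinely delicate part of the argument.
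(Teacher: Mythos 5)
Your overall architecture (Markov selection, then Gram--Schmidt on the original vectors, then a single global polar orthogonalisation into $\mathrm{range}(I-p)$) is genuinely different from the paper's, and the polar-decomposition perturbation bound you invoke is correct. But the argument as written has a genuine gap exactly at the step you use to certify $|M'| \geq (1-\sqrt{\rho})|M|$. You bound $\dim(V\cap\mathrm{range}\,p) \leq \mathrm{tr}(QpQ) = \sum_{g\in M}||p\zeta_g^0||^2 \leq (2\rho + 8C_{|M|}^2\lambda^2)|M|$ and then assert this ``stays below $\sqrt{\rho}\,|M|$ in the nontrivial range.'' That assertion is false: $\rho$ and $\lambda$ are \emph{independent} parameters in the proposition, and nothing in the nondegeneracy reduction ($\sqrt{\rho} < \tfrac12$, $C_{|M|}\lambda < \tfrac14$) prevents $\rho \ll C_{|M|}^2\lambda^2$, e.g.\ $\rho = 10^{-10}$ and $C_{|M|}\lambda = 10^{-1}$, in which case $8C_{|M|}^2\lambda^2|M| \gg \sqrt{\rho}|M|$. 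The loss is structural, not a matter of constants: your route estimates $\mathrm{tr}(QpQ)$ through the perturbed basis $\zeta_g^0$, so the Gram--Schmidt error $C_{|M|}\lambda$ enters \emph{squared and summed over $|M|$ terms}, producing a term with no $\rho$ in it at all; consequently your certified lower bound on $\dim U$ is $(1 - 2\rho - 8C_{|M|}^2\lambda^2)|M|$, which can fall far short of the required $(1-\sqrt{\rho})|M|$. (A repair is possible --- e.g.\ estimate $\mathrm{tr}(Qp)$ via the inverse Gram matrix of the original vectors $\eta_g$, whose off-diagonal entries are $O(|M|\lambda)$, so that the bound becomes $\rho|M|(1+O(|M|^2\lambda))$ --- but that is a different argument from the one you gave. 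The same applies to the injectivity of $(I-p)$ on $V'$, which you flag as ``the genuinely delicate part'' but never actually establish; it is part of the same missing count.)

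It is also worth noting that the reason you gave for rejecting the paper's route rests on a misreading of what that route would be. The paper does \emph{not} run Gram--Schmidt on the projected vectors $(I-p)\alpha(g)\xi$ (which, as you say, would compound $\rho$ into $C_{|M|}\rho$). Instead it performs a one-pass construction on the \emph{original} vectors $\xi_j = \alpha(g_j)\xi$, at each step normalising $(I-(p\vee q_{j-1}))\xi_j$, where $q_{j-1}$ is the projection onto the span of the original $\xi_1,\ldots,\xi_{j-1}$. Proposition \ref{prop.09-15.1} (applied to the unprojected vectors, which satisfy Condition $(M,\lambda)$-I with parameter $\lambda$ alone) gives $||q_{j-1}\xi_j|| \leq C_{|M|}\lambda$, and the subadditivity $||(p\vee q_{j-1})\xi_j|| \leq ||p\xi_j|| + ||q_{j-1}\xi_j||$ lets the $\rho$-error enter once per vector, linearly, via $||p\xi_j|| \leq \sqrt{\rho}$ on the Markov set $S$. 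So the compounding constant $C_{|M|}$ multiplies only $\lambda$, never $\rho$; the output vectors are exactly orthonormal, lie exactly in $\mathrm{range}(I-p)$, and the family indexed by $S$ immediately witnesses $\dim U \geq |S| \geq (1-\sqrt{\rho})|M|$ --- precisely the count your two-stage scheme fails to deliver. Your second stage buys nothing over this, and costs the squared error; the fix is either to adopt the paper's one-pass orthogonalisation or to redo your trace estimate through the Gram matrix rather than through $\zeta_g^0$. Separately, even where your estimates are valid, the final accounting gives roughly $5\sqrt{\rho}+8C_{|M|}\lambda$ rather than the claimed $4(\sqrt{\rho}+C_{|M|}\lambda)$, so ``chasing constants'' needs to be done honestly, though this is minor by comparison.
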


\begin{proof}[Proof of Proposition \ref{prop.09-13.1}] Enumerate $M = \{g_1,\ldots,g_n\}$. For $j \in \{1,\ldots,n\}$ define $\xi_j = \alpha(g_j)\xi$ and let $q_j$ be the orthogonal projection onto the span of $\xi_1,\ldots,\xi_j$. Since Condition $(M,\lambda)$-I is satisfied, Proposition \ref{prop.09-15.1} implies that $||q_j \alpha(g_{j+1})\xi||^2 \leq C_n\lambda$ for all $j \in \{1,\ldots,n-1\}$. Hence for all $j \in \{2,\ldots,n\}$ we have \[ ||(p \vee q_{j-1})\xi_j|| \leq C_n\lambda+||p \xi_j||  \]

The inequality (\ref{eq.09-13.1}) implies that there exists a set $S \subseteq \{1,\ldots,n\}$ such that $|S| \geq (1-\sqrt{\rho})n$ and $||p\xi_j|| \leq \sqrt{\rho}$ for all $j \in S$. For $j \in \{2,\ldots,n\}$ define \[ \zeta_{g_j} = \frac{(I-(p \vee q_{j-1}))\xi_j}{||(I-(p\vee q_{j-1}))\xi_j||} \] If $j \in S$ then we have \[ ||\zeta_{g_j} - \alpha(g_j)\xi|| \leq \frac{\sqrt{\rho}+C_n\lambda}{1-\sqrt{\rho}-C_n\lambda} \leq 2( \sqrt{\rho}+C_n\lambda) \] and if $j \notin S$ then we still have the trivial bound $||\zeta_{g_j}-\alpha(g_j)\xi|| \leq 2$. Thus we may compute \begin{align*} \frac{1}{n} \sum_{j=1}^n ||\zeta_j - \xi_j|| & \leq \frac{1}{n} \sum_{j \in S} ||\zeta_{g_j}-\xi_j|| + \frac{1}{n} \sum_{\substack{1 \leq j \leq n \\ j \notin S}} ||\zeta_{g_j}-\xi_j|| \\ & \leq \frac{1}{n} \sum_{j \in S} 2(\sqrt{\rho}+C_n\lambda) + \frac{2}{n}|S| \leq 4(\sqrt{\rho}+C_n\lambda) \end{align*} We can let $M' = \{\zeta_{g_j}:j \in S\}$. \end{proof}

\subsection{Proximity to permutation actions} \label{sec.good}

Throughout the remainder of Section \ref{sec.prelim} we fix the finite set $M$. Suppose furthermore that $L$ is a finite subset of $M$ such that $|hM \cap M| \geq (1-\eta)|M|$ for some $\eta > 0$ and all $h \in L$. Then for each $h \in L$ we can find a permutation $\varsigma(h)$ of $M$ such that \[ \frac{1}{|M|}|\{g \in M:\varsigma(h)g \neq hg\}| \leq \eta \]

\begin{proposition} \label{prop.09-15.2} Let $\varpi,\lambda > 0$. Let $\xi \in X$ satisfy Condition $(M,\lambda)$-II and let $(\zeta_g)_{g \in M}$ be a family of unit vectors indexed by $M$ such that \begin{equation} \label{eq.09-17.1} \frac{1}{|M|} \sum_{g \in M} ||\alpha(g)\xi-\zeta_g|| \leq \varpi \end{equation} Then for any $h \in L$ we have \[ \frac{1}{|M|} \sum_{g \in M}||\alpha(h)\zeta_g - \zeta_{\varsigma(h)g}|| \leq \eta+2\varpi+\lambda \] \end{proposition}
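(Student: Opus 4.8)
The plan is to split $M$ into the \emph{good} set $M_h = \{g \in M : \varsigma(h)g = hg\}$, whose relative size is at least $1-\eta$ by the defining property of $\varsigma(h)$, and its complement, and to estimate the summand $||\alpha(h)\zeta_g - \zeta_{\varsigma(h)g}||$ by a three-term triangle inequality on $M_h$ while falling back on a trivial bound off $M_h$. On the good set I would insert the auxiliary vectors $\alpha(h)\alpha(g)\xi$ and $\alpha(\varsigma(h)g)\xi$, writing
\[ ||\alpha(h)\zeta_g - \zeta_{\varsigma(h)g}|| \leq ||\alpha(h)\zeta_g - \alpha(h)\alpha(g)\xi|| + ||\alpha(h)\alpha(g)\xi - \alpha(\varsigma(h)g)\xi|| + ||\alpha(\varsigma(h)g)\xi - \zeta_{\varsigma(h)g}||, \]
and then averaging the three resulting sums separately.

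For the first sum I would use that $\alpha(h)$ is unitary, so $||\alpha(h)\zeta_g - \alpha(h)\alpha(g)\xi|| = ||\zeta_g - \alpha(g)\xi||$, whose average over $g \in M$ is at most $\varpi$ by hypothesis (\ref{eq.09-17.1}). For the third sum the key observation is that $\varsigma(h)$ is a permutation of $M$, so $g \mapsto \varsigma(h)g$ is a bijection of $M$; in particular $\zeta_{\varsigma(h)g}$ is always defined and, reindexing, $\frac{1}{|M|}\sum_{g\in M}||\alpha(\varsigma(h)g)\xi - \zeta_{\varsigma(h)g}|| = \frac{1}{|M|}\sum_{g'\in M}||\alpha(g')\xi - \zeta_{g'}|| \leq \varpi$, again by (\ref{eq.09-17.1}). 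It is precisely to keep all indices inside $M$ that I route the estimate through $\varsigma(h)$ rather than through genuine left translation by $h$, which could carry $g$ outside $M$ and leave $\zeta_{hg}$ undefined.

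The middle sum is where Condition $(M,\lambda)$-II enters: on $M_h$ we have $\alpha(\varsigma(h)g)\xi = \alpha(hg)\xi$, so the summand equals $||(\alpha(hg)-\alpha(h)\alpha(g))\xi|| \leq \lambda$, which is exactly Condition $(M,\lambda)$-II applied with $h \in L \subseteq M$ and $g \in M$; its average is therefore at most $\lambda$. Off the good set I would not attempt the three-term split but simply use that $\zeta_g$ and $\zeta_{\varsigma(h)g}$ are unit vectors and $\alpha(h)$ is unitary, giving the trivial bound $||\alpha(h)\zeta_g - \zeta_{\varsigma(h)g}|| \leq 2$ on each of the at most $\eta|M|$ exceptional indices. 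Collecting the good-set contributions $\varpi + \lambda + \varpi$ with the exceptional contribution governed by $\eta$ yields the stated inequality.

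Since every ingredient is already supplied by (\ref{eq.09-17.1}), Condition $(M,\lambda)$-II, and the defining inequality for $\varsigma(h)$, I do not anticipate a genuine obstacle here. The only point that demands care is the bookkeeping: arranging the middle differences so that the permutation reindexing applies cleanly to the third sum and all $\zeta$-indices stay within $M$, and correctly isolating the set on which $\varsigma(h)g \neq hg$ so that its contribution is absorbed into the $\eta$ term.
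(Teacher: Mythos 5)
Your proposal is correct and follows essentially the same route as the paper: the paper bounds the average of $||\alpha(h)\zeta_g - \zeta_{\varsigma(h)g}||$ by a four-term triangle inequality through $\alpha(h)\alpha(g)\xi$, $\alpha(hg)\xi$ and $\alpha(\varsigma(h)g)\xi$, using unitarity of $\alpha(h)$ for the first term, Condition $(M,\lambda)$-II for the second, the defining property of $\varsigma(h)$ for the third, and the permutation reindexing $g \mapsto \varsigma(h)g$ for the fourth --- exactly the ingredients you use, with your good-set/bad-set split being only a cosmetic reorganization of the same estimate. One remark: your exceptional-set contribution is really $2\eta$ (each bad summand is bounded by $2$), so what your argument literally yields is $2\eta + 2\varpi + \lambda$; but the paper has the identical slack, asserting $\frac{1}{|M|}\sum_{g\in M}||\alpha(hg)\xi - \alpha(\varsigma(h)g)\xi|| \leq \eta$ where the trivial pointwise bound gives only $2\eta$, so your proof is faithful to, and no weaker than, the paper's own.
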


\begin{proof}[Proof of Proposition \ref{prop.09-15.2}] Since $h \in L$ we have \[ |\{g \in M:\varsigma(h)g \neq hg\}| \leq \eta |M| \] and therefore \[ \frac{1}{|M|} \sum_{g \in M} ||\alpha(hg)\xi - \alpha(\varsigma(h)g)\xi|| \leq \eta \] From Condition $(M,\lambda)$-II we have \[ \frac{1}{|M|} \sum_{g \in M} ||\alpha(hg)\xi - \alpha(h)\alpha(g)\xi|| \leq \lambda \] By putting the two previous displays together with (\ref{eq.09-17.1}) we obtain 

\begin{align*} \frac{1}{|M|} \sum_{g \in M}||\alpha(h)\zeta_g - \zeta_{\varsigma(h)g}|| & \leq \frac{1}{|M|} \sum_{g \in M}||\alpha(h)\zeta_g - \alpha(h)\alpha(g)\xi|| + \frac{1}{|M|} \sum_{g \in M}||\alpha(h)\alpha(g)\xi-\alpha(gh)\xi|| \\ & \hspace{0.5 in} +\frac{1}{|M|} \sum_{g \in M} ||\alpha(gh)\xi-\alpha(\varsigma(h)g)\xi|| +\frac{1}{|M|} \sum_{g \in M} ||\alpha(\varsigma(h)g)\xi-\zeta_{\varsigma(h)g}|| \\ & \leq \eta+\lambda+2\varpi \end{align*} \end{proof}

\subsection{Approximation invariance of projections onto partial orbits}

\begin{proposition} \label{prop.09-15.6} Let $\xi_1,\ldots,\xi_m \in X$ be unit vectors satisfying Condition $(M,\lambda)$-II and let $q$ be the projection onto the span of $\{\alpha(g)\xi_j:1 \leq j \leq m,g \in M\}$. Then we have \[ ||(I-q)\alpha(h)q||_{\mathrm{HS}} \leq \sqrt{\frac{m|M|(\eta+5C_{|M|}\lambda)}{\Delta(X)}} \] for all $h \in L$.  \end{proposition}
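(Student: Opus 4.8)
The plan is to compute the squared Hilbert--Schmidt norm as a trace against the projection $q$ and to estimate that trace on a conveniently chosen orthonormal basis of the range of $q$ built from the spanning vectors $\alpha(g)\xi_j$. Writing $V$ for the range of $q$ and taking any orthonormal basis $(e_i)$ of $V$, unitary invariance and cyclicity of the trace give
\[ \Delta(X)\,\|(I-q)\alpha(h)q\|_{\mathrm{HS}}^2 = \mathrm{tr}\big((I-q)\alpha(h)q\alpha(h)^\ast(I-q)\big) = \sum_i \|(I-q)\alpha(h)e_i\|^2, \]
so it suffices to control the right-hand side for a well-chosen orthonormal basis of $V$.

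The robust core of the estimate comes from Condition $(M,\lambda)$-II together with the near-invariance of $M$ under $L$. For $g \in M$ and each $j$, Condition $(M,\lambda)$-II applied to the pair $(h,g)$ gives $\|\alpha(h)\alpha(g)\xi_j - \alpha(hg)\xi_j\| \le \lambda$. Whenever $hg \in M$ the vector $\alpha(hg)\xi_j$ is one of the generators of $V$, so $(I-q)\alpha(hg)\xi_j = 0$ and hence $\|(I-q)\alpha(h)\alpha(g)\xi_j\| \le \lambda$. Since $|hM \cap M| \ge (1-\eta)|M|$, the set of $g \in M$ with $hg \notin M$ has size at most $\eta|M|$, and for those I keep the trivial bound $\|(I-q)\alpha(h)\alpha(g)\xi_j\| \le 1$. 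Summing over $g$ and $j$ yields $\sum_{g,j}\|(I-q)\alpha(h)\alpha(g)\xi_j\|^2 \le m|M|(\eta + \lambda^2)$.

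It then remains to transfer this bound on the spanning set to the orthonormal basis appearing in the trace identity. I would orthonormalize the family $(\alpha(g)\xi_j)_{g \in M,\,j}$ by Gram--Schmidt to obtain an orthonormal basis $(\zeta_{g,j})$ of $V$, using Proposition \ref{prop.09-15.1} to bound the perturbation by $\|\zeta_{g,j} - \alpha(g)\xi_j\| \le 2C_{|M|}\lambda$. Evaluating the trace identity on $(\zeta_{g,j})$ and estimating $\|(I-q)\alpha(h)\zeta_{g,j}\| \le \|(I-q)\alpha(h)\alpha(g)\xi_j\| + \|\zeta_{g,j} - \alpha(g)\xi_j\|$, then squaring and summing, combines the previous display with the Gram--Schmidt error; absorbing the $\lambda^2$ terms into $C_{|M|}\lambda$ gives $\Delta(X)\,\|(I-q)\alpha(h)q\|_{\mathrm{HS}}^2 \le m|M|(\eta + 5C_{|M|}\lambda)$, and taking square roots finishes the proof.

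The main obstacle is precisely this last transfer step. Passing from an estimate on a spanning set to an estimate on a genuine orthonormal basis is stable only when the spanning vectors are well conditioned, that is, pairwise nearly orthogonal; otherwise Gram--Schmidt can amplify the $\lambda$-sized errors in an uncontrolled way. Proposition \ref{prop.09-15.1} supplies exactly the needed quantitative control, at the cost of the factor $C_{|M|}$, but it relies on $|\langle \alpha(g)\xi_j, \alpha(g')\xi_{j'}\rangle| \le \lambda$ for distinct pairs, a near-orthogonality of the partial orbit that is not part of Condition $(M,\lambda)$-II. I would therefore draw this near-orthogonality from the ambient setup in which the proposition is applied, where the $\xi_j$ are chosen so that their orbits separate, or record it as a standing hypothesis; establishing and correctly bookkeeping this near-orthogonality, rather than the boundary count, is the delicate part of the argument.
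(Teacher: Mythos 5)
Your proof is essentially the paper's own argument: the paper likewise reduces $\|(I-q)\alpha(h)q\|_{\mathrm{HS}}^2$ to a sum of $\|(I-q)\alpha(h)\zeta_{g,j}\|^2$ over Gram--Schmidt orthogonalizations $\zeta_{g,j}$ of the vectors $\alpha(g)\xi_j$, splits each term by the triangle inequality into exactly your three pieces (vanishing of $(I-q)\alpha(hg)\xi_j$ when $hg\in M$, Condition $(M,\lambda)$-II, and the orthogonalization error), and bounds their contributions per index $j$ by $\eta|M|$, $\lambda|M|$ and $4C_{|M|}\lambda|M|$ respectively, with the same loose constant bookkeeping.

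The caveat you raise at the end is genuine, and it is worth knowing that the paper does not escape it either: its proof invokes Proposition \ref{prop.09-13.1} (with $p=0$, $\rho=0$) to orthogonalize, and the hypothesis of that proposition is Condition $(M,\lambda)$-I, which is not among the stated hypotheses of Proposition \ref{prop.09-15.6}. The one organizational difference from your write-up is that the paper orthogonalizes each partial orbit $\{\alpha(g)\xi_j : g \in M\}$ separately, so it only needs Condition I for each individual $\xi_j$ rather than near-orthogonality across distinct $j,j'$; but it then pays for this elsewhere, since its opening inequality $\|(I-q)\alpha(h)q\|_{\mathrm{HS}}^2 \le \frac{1}{\Delta(X)}\sum_{j,g}\|(I-q)\alpha(h)\zeta_{g,j}\|^2$ silently requires the spans of distinct partial orbits to be (at least nearly) mutually orthogonal, because in general a sum of projections need not dominate their join. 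In the sole application of the proposition, inside the proof of Lemma \ref{lem}, the $\xi_j$ are constructed so that Conditions I and II both hold and distinct partial orbits are nearly orthogonal; so your proposed remedy --- importing the near-orthogonality from the ambient construction or recording it as a standing hypothesis --- is precisely what the paper implicitly does, and with it your argument closes.
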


\begin{proof}[Proof of Proposition \ref{prop.09-15.6}] Fix $j \in \{1,\ldots,m\}$. By applying Proposition \ref{prop.09-13.1} with $p=0$ and $\rho = 0$ we can find an orthonormal basis $(\zeta_{j,g})_{g \in M}$ for the span of $\{\alpha(g)\xi_j:g \in M\}$ such that \begin{equation} \label{eq.09-15.8} \sum_{g \in M}||\alpha(g)\xi_j-\zeta_{g,j}|| \leq 4C_{|M|}\lambda |M| \end{equation} Therefore we may compute \begin{align} &||(I-q)\alpha(h)q||^2_{\mathrm{HS}} \nonumber \\ & \leq \frac{1}{\Delta(X)} \sum_{j=1}^m \sum_{g \in M} ||(I-q)\alpha(h)\zeta_{g,j}||^2 \nonumber \\ & \leq \frac{1}{\Delta(X)} \sum_{j=1}^m \sum_{g \in M} \Bigl(||(I-q)\alpha(hg)\xi_j||+||(I-q)(\alpha(h)\alpha(g)-\alpha(hg))\xi_j||+||(I-q)\alpha(h)(\alpha(g)\xi_j-\zeta_{g,j})||\Bigr)^2 \nonumber \\ & \leq \frac{3}{\Delta(X)} \sum_{j=1}^m\sum_{g \in M} \Bigl(||(I-q)\alpha(hg)\xi_j||^2+||(I-q)(\alpha(h)\alpha(g)-\alpha(hg))\xi_j||^2+||(I-q)\alpha(h)(\alpha(g)\xi_j-\zeta_{g,j})||^2 \Bigr) \nonumber \\ & \leq \frac{3}{\Delta(X)} \sum_{j=1}^m\sum_{g \in M}||(I-q)\alpha(hg)\xi_j|| \label{eq.09-18.1} \\ & \hspace{0.5 in} + \frac{3}{\Delta(X)} \sum_{j=1}^m \sum_{g \in M} ||(\alpha(h)\alpha(g)-\alpha(hg))\xi_j|| \label{eq.09-18.2} \\ & \hspace{1 in} +\frac{3}{\Delta(X)}\sum_{j=1}^m\sum_{g \in M} ||\alpha(g)\xi_j-\zeta_{g,j}|| \label{eq.09-18.3} \end{align}

Now we may observe the following bounds, from which Proposition \ref{prop.09-15.6} follows immediately.

\begin{itemize} \item If $g \in h^{-1}M \cap M$ then we have $(I-q)\alpha(gh)\xi =0$. Therefore the sum over $g \in M$ in (\ref{eq.09-18.1}) is at most $|h^{-1}M \setminus M| \leq \eta|M|$. \item The sum over $g \in M$ in (\ref{eq.09-18.2}) is at most $\lambda|M|$ since Condition $(M,\lambda)$-II holds for $\xi$ \item The sum over $g \in M$ in (\ref{eq.09-18.3}) is at most $4C_{|M|}\lambda|M|$ by (\ref{eq.09-15.8}). \end{itemize} \end{proof}

\begin{proposition} \label{prop.09-15.3} Let $\theta > 0$, let $p$ be a projection and let $u$ be a unitary operator such that $||(I-p)up||^2_{\mathrm{HS}} \leq \theta$. Then there exists an operator $v$ commuting with $p$ such that $v^\ast v = vv^\ast = p$ and $||(u-v)p||_{\mathrm{HS}}^2 \leq 4 \theta$.\end{proposition}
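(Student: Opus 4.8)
The plan is to build $v$ from the polar decomposition of the compression $pup$ of $u$ to the range of $p$, exploiting the hypothesis that $u$ barely moves that range. First I would set $w := up$ and record that it is a partial isometry, since $w^\ast w = p u^\ast u p = p$. Writing $w = a + b$ with $a := pup$ and $b := (I-p)up$, the hypothesis becomes $\|b\|_{\mathrm{HS}}^2 \le \theta$, and the operator $a$ maps the range of $p$ into itself. From $a^\ast a + b^\ast b = p u^\ast u p = p$ I obtain the key identity $a^\ast a = p - b^\ast b$ on the range of $p$; in particular $a^\ast a$ and $b^\ast b$ commute, so they are simultaneously diagonalizable with eigenvalues $s_k^2$ and $\mu_k$ satisfying $s_k^2 + \mu_k = 1$, where the $s_k$ are the singular values of $a$ and $\sum_k \mu_k = \mathrm{tr}(b^\ast b) = \Delta(X)\|b\|_{\mathrm{HS}}^2 \le \Delta(X)\theta$.

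Next I would take the polar decomposition $a = v_0 |a|$ on the range of $p$, where $|a| = (a^\ast a)^{1/2}$, and extend the partial isometry $v_0$ to a genuine unitary $v$ on the range of $p$ (possible because, inside the finite-dimensional space $\mathrm{range}\,p$, the subspaces $\ker a$ and $(\mathrm{range}\,a)^\perp$ have equal dimension), setting $v = 0$ on the range of $I-p$. By construction $v$ commutes with $p$ and satisfies $v^\ast v = v v^\ast = p$, and the extension agrees with $v_0$ on $(\ker a)^\perp$, so the identity $a = v|a|$ persists.

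For the error estimate I would use the Hilbert--Schmidt orthogonality of the two pieces of $w - v = (a - v) + b$: the operator $a - v$ is supported on the range of $p$ while $b$ sends the range of $p$ into the range of $I-p$, so the cross term vanishes and $\|(u-v)p\|_{\mathrm{HS}}^2 = \|a - v\|_{\mathrm{HS}}^2 + \|b\|_{\mathrm{HS}}^2$. Since $a - v = v(|a| - p)$ and $v$ is isometric on the range of $p$, I get $\|a-v\|_{\mathrm{HS}}^2 = \frac{1}{\Delta(X)}\sum_k (1 - s_k)^2$. The last ingredient is the elementary inequality $1 - s_k = 1 - \sqrt{1-\mu_k} \le \mu_k$, which with $0 \le 1 - s_k \le 1$ gives $(1-s_k)^2 \le \mu_k$, whence $\|a-v\|_{\mathrm{HS}}^2 \le \frac{1}{\Delta(X)}\sum_k \mu_k \le \theta$. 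Adding the two contributions yields $\|(u-v)p\|_{\mathrm{HS}}^2 \le 2\theta \le 4\theta$.

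I expect the main obstacle to be the bookkeeping around the polar decomposition rather than any single estimate: one must verify that the partial isometry genuinely extends to a unitary on the range of $p$, that the resulting $v$ commutes with $p$, and that the error isolates cleanly as the singular-value defect $\sum_k (1-s_k)^2$ with no stray cross terms. The comfortable gap between the $2\theta$ this argument produces and the stated $4\theta$ suggests that a coarser treatment of the polar decomposition---for instance bounding $\|a-v\|_{\mathrm{HS}}$ directly without the orthogonal splitting of $w - v$---would already suffice, which is presumably the route the author takes.
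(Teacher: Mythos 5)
Your proof is correct, and while it shares the paper's germ of an idea --- replace the compression $pup$ by a genuine unitary on the range of $p$ by pushing its singular values to $1$ --- the mechanism is genuinely different and tighter. The paper sets $w = pup$, derives the \emph{approximate} identity $\|w^\ast w - p\|_{\mathrm{HS}} \le \theta$, and then has to convert this $\ell^2$-type bound on the numbers $|s_k|^2 - 1$ into usable control: it invokes Cauchy--Schwarz to get an $\ell^1$ bound, introduces a Markov-type exceptional set $S$ of singular values far from $1$, rounds the remaining ones to modulus one, and assembles the estimate with a triangle inequality. You instead exploit the \emph{exact} Pythagorean identity $a^\ast a + b^\ast b = p$ (with $a = pup$, $b = (I-p)up$), which makes the defects $\mu_k = 1 - s_k^2$ nonnegative with $\sum_k \mu_k = \mathrm{tr}(b^\ast b) \le \theta\,\Delta(X)$ --- an $\ell^1$ bound with no Cauchy--Schwarz loss and no exceptional set. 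The polar decomposition then produces the unitary $v$ directly, the elementary inequality $(1-s_k)^2 \le 1 - s_k^2 = \mu_k$ handles all singular values uniformly, and the Hilbert--Schmidt orthogonality of $a - v$ and $b$ replaces the paper's triangle inequality, yielding $2\theta$ rather than $4\theta$. Your closing guess is accurate: the paper does bound the two error pieces separately and add them, and in fact, read literally, the paper's proof only bounds $\|(w-v)\|_{\mathrm{HS}}^2$ by $4\theta$ and still owes a triangle inequality against $\|(u-w)p\|_{\mathrm{HS}} \le \sqrt{\theta}$, which would inflate the constant further; your orthogonal splitting sidesteps that issue entirely. All the delicate points you flagged check out: $v_0$ extends to a unitary on $\mathrm{range}\,p$ because $\ker a$ and $(\mathrm{range}\,a)^\perp$ have equal dimension there, the extended $v$ still satisfies $a = v|a|$ since $\mathrm{range}\,|a| \subseteq (\ker a)^\perp$, and the cross term $\mathrm{tr}(b^\ast(a-v))$ vanishes because $(I-p)(a-v) = 0$.
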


\begin{proof}[Proof of Proposition \ref{prop.09-15.3}] Define $w = p u p$, so that $w$ commutes with $p$. We have \[ ||(w-u)p||^2_{\mathrm{HS}} = || pup-up  ||^2_{\mathrm{HS}} = ||(I-p)up||^2_{\mathrm{HS}} \leq \theta \] Moreover, we have \begin{equation} \label{eq.09-15.3} ||w^\ast w - p||_{\mathrm{HS}} = ||pu^\ast p u p - p||_{\mathrm{HS}} \leq ||p - pu^\ast u p||_{\mathrm{HS}} + || pu^\ast(I-p)up ||_{\mathrm{HS}} = ||(I-p)up||_{\mathrm{HS}} \leq \theta \end{equation} Now, choose an orthonormal basis $\zeta_1,\ldots,\zeta_d$ for $X$ such that $\zeta_1,\ldots,\zeta_j$ is an orthonormal basis for the range of $p$. Since $p$ and $w$ commute, according to the singular value decomposition we can find unitary operators $a$ and $b$ such that $a^\ast p b$ and $a^\ast w b$ are both diagonal in this basis. Let $s_1,\ldots,s_j$ be the nonzero entries of $a^\ast w b$. Since $p\zeta_k = \zeta_k$ for $k \in \{1,\ldots,j\}$ we have that the entries of $a^\ast p b$ are one in the first $j$ rows. Therefore (\ref{eq.09-15.3}) implies that \[ \Bigl \vert |s_1|^2 - 1 \Bigr \vert + \cdots \Bigl \vert |s_j|^2-1 \Bigr \vert \leq \theta \Delta(X)  \] Therefore there exists a set $S \subseteq \{1,\ldots,j\}$ with $|S| \geq j-\theta \Delta(X)$ such that  \[ \Bigl \vert |s_k|^2 -1 \Bigr \vert \leq \theta \] for all $k \in S$ and therefore \[ \left \vert s_k - \frac{s_k}{|s_k|} \right \vert \leq \frac{|s_k^2 - s_k|}{|s_k|} \leq \frac{|s_k^2-1|}{|s_k|} \leq \frac{\theta}{1-\theta} \leq 2\theta \] since $0 \leq s_k \leq 1$.

Thus if we let $c$ be a diagonal matrix with $\frac{s_k}{|s_k|}$ in the $k^{\mathrm{th}}$ row for all $k \in \{1,\ldots,j\}$ and $0$ in the $j+1,\ldots,d$ rows we find \[ \Delta(X)||c-a^\ast w b||^2_{\mathrm{HS}} \leq 2(j-|S|) + \sum_{k \in S} \left \vert s_k - \frac{s_k}{|s_k|} \right \vert \leq 4\theta \Delta(X) \] Then $c^\ast c = c c^\ast = b^\ast p b$ so that if we let $v = a c b^\ast$ then $v$ is as required. \end{proof}

\section{Statement and proof of main lemma}

\begin{lemma} \label{lem} Let $G$ be a countable discrete group, let $L \subseteq G$ be finite and let $\eta > 0$. Let $M$ be a finite subset of $G$ such that $|hM \cap M| \geq (1-\eta)|M|$ for all $h \in L$. Let also $\lambda,\kappa \in (0,1)$. Let $X$ be a finite dimensional Hilbert space and let $\alpha:G \to \mathrm{U}(X)$ be an $\left(M,\frac{\lambda}{8|M|^2}\right)$-hyperlinear approximation to $G$. Then we have decomposition $X = Y \oplus Z$ such that $\Delta(Z) \geq \frac{\kappa}{2}\Delta(X)$ and the following objects exist. \begin{itemize} \item A hyperlinear approximation $\gamma:G \to \mathrm{U}(Z)$ induced by an $(L,\kappa+\eta)$ sofic approximation to $G$ such that for all $h \in L$ we have \[ \frac{1}{\Delta(Z)} \mathrm{tr}((\gamma(h)-\alpha(h))^\ast(\gamma(h)-\alpha(h))  \leq 2\eta+4\kappa+5C_{|M|}\lambda \] \item A $(L,4\sqrt{\eta+5C_{|M|}\lambda})$ hyperlinear approximation $\beta:G \to \mathrm{U}(Y)$ such that for all $h \in L$ we have \[ \frac{1}{\Delta(Y)} \mathrm{tr}((\gamma(h)-\alpha(h))^\ast(\gamma(h)-\alpha(h)) \leq 16(\eta+5C_{|M|}\lambda) \] \end{itemize}  \end{lemma}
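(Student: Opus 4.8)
The plan is to realize the decomposition by sampling: I would build $Z$ as the span of finitely many approximately orthogonal partial orbits $\{\alpha(g)\xi_j : g \in M\}$ of random vectors, and set $Y = Z^{\perp}$. Drawing $\boldsymbol{\xi}$ uniformly from the unit sphere, Corollary \ref{cor.09-12.1} applied to each operator $\alpha(gh) - \alpha(g)\alpha(h)$ with $c = 8|M|^2$ shows, after a union bound over the $|M|^2$ pairs in $M$, that Condition $(M,\lambda)$-II holds with probability at least $\tfrac{7}{8}$. I would then choose the vectors one at a time: if $\xi_1,\dots,\xi_{j-1}$ are chosen and $p_{j-1}$ projects onto the span of their orbits, then as long as $\mathrm{tr}(p_{j-1}) \le \kappa\,\Delta(X)$ Corollary \ref{cor.09-12.2} (with $p = p_{j-1}$ and $\{u_k\} = \{\alpha(g) : g \in M\}$) guarantees that a positive-probability choice of $\xi_j$ additionally satisfies $\tfrac{1}{|M|}\sum_{g \in M}\|p_{j-1}\alpha(g)\xi_j\| \le \rho$ for a $\rho$ comparable to $\sqrt{\kappa}$, so the new orbit is nearly orthogonal to the previous ones. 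I would stop at the first index $m$ with $\mathrm{tr}(p_m) \ge \tfrac{\kappa}{2}\Delta(X)$; writing $q = p_m$ for the projection onto $Z := \operatorname{span}\{\alpha(g)\xi_j\}$, this yields $\tfrac{\kappa}{2}\Delta(X) \le \Delta(Z) \le \kappa\,\Delta(X)$ while keeping the hypothesis of Corollary \ref{cor.09-12.2} in force throughout.

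To build $\gamma$, for each $j$ I would apply Proposition \ref{prop.09-13.1} with $p = p_{j-1}$, obtaining $M'_j \subseteq M$ with $|M'_j| \ge (1-\sqrt{\rho})|M|$ and an orthonormal basis $(\zeta_{j,g})_{g \in M'_j}$ of $\operatorname{range}(p_j) \ominus \operatorname{range}(p_{j-1})$ approximating the orbit on average. Collecting these gives an orthonormal basis $\{\zeta_{j,g}\}$ of $Z$ indexed by $V := \bigsqcup_j (\{j\} \times M'_j)$, so $|V| = \Delta(Z)$. Using the permutations $\varsigma(h)$ of $M$ from Section \ref{sec.good}, I would define $\sigma(h) \in \operatorname{Sym}(V)$ to send $(j,g) \mapsto (j,\varsigma(h)g)$ whenever both lie in $V$, completing it arbitrarily on the remaining $O(\sqrt{\rho}|M|)$ points of each block, and let $\gamma(h)\zeta_{j,g} = \zeta_{j,\sigma(h)(j,g)}$. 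To bound $\tfrac{1}{\Delta(Z)}\mathrm{tr}((\gamma(h)-\alpha(h))^{\ast}(\gamma(h)-\alpha(h)))$ I would split $(\gamma(h)-\alpha(h))|_Z$ as $(\gamma(h) - q\alpha(h)q) - (I-q)\alpha(h)q$: the first term is controlled on average by Proposition \ref{prop.09-15.2} (which gives $\alpha(h)\zeta_{j,g} \approx \zeta_{j,\varsigma(h)g}$ and hence supplies the $\eta$ and the $\kappa$-contributions via $\rho \sim \kappa$), while the corner term $(I-q)\alpha(h)q$ is controlled by Proposition \ref{prop.09-15.6}, supplying the remaining $\eta$ and the $C_{|M|}\lambda$-contribution after renormalizing $\tfrac{\Delta(X)}{\Delta(Z)}$. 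That $\sigma$ is an $(L,\kappa+\eta)$ sofic approximation is then checked from its two defining clauses directly: approximate multiplicativity is inherited from $\varsigma(h)g \approx hg$, built into the Følner hypothesis $|hM \cap M| \ge (1-\eta)|M|$, and the separation clause holds since distinct $g$ give distinct $\varsigma(h)g$.

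For the complementary piece I would take $p = I - q$ and run the compression machinery on $Y$. Proposition \ref{prop.09-15.6}, together with unitarity of $\alpha(h)$ after symmetrizing $L$ so the Følner estimate survives inversion, bounds $\|q\alpha(h)(I-q)\|_{\mathrm{HS}}^2$ by $\theta = \tfrac{m|M|(\eta + 5C_{|M|}\lambda)}{\Delta(X)}$. Proposition \ref{prop.09-15.3} with $p = I-q$ then produces a unitary $\beta(h)$ on $Y$, commuting with $I-q$, with $\|(\alpha(h)-\beta(h))(I-q)\|_{\mathrm{HS}}^2 \le 4\theta$; renormalizing from $\Delta(X)$ to $\Delta(Y)$ and using $m|M| \le \Delta(Z)$ gives the asserted trace bound. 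That $\beta$ is an $(L, 4\sqrt{\eta + 5C_{|M|}\lambda})$ hyperlinear approximation follows because the approximate $\alpha(h)$-invariance of $q$ lets approximate multiplicativity and separation pass from $\alpha$ to its compression to the large subspace $Y$, up to errors of order $\sqrt{\theta}$.

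The step I expect to be the main obstacle is guaranteeing Condition $(M,\lambda)$-I for the sampled vectors, that is, that the full orbit $\{\alpha(g)\boldsymbol{\xi} : g \in M\}$ is genuinely approximately orthonormal with positive probability. Since $\langle \alpha(g)\boldsymbol{\xi}, \alpha(h)\boldsymbol{\xi}\rangle = \langle \boldsymbol{\xi}, \alpha(g)^{\ast}\alpha(h)\boldsymbol{\xi}\rangle$ concentrates around $\tfrac{1}{\Delta(X)}\mathrm{tr}(\alpha(g)^{\ast}\alpha(h))$, this requires the \emph{modulus} of that normalized trace to be small for distinct $g,h \in M$, whereas the separation clause only controls its real part; indeed a rogue global phase such as $\alpha(h) = i\,\alpha(g)$ satisfies separation yet destroys orthogonality. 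Ruling this out is precisely where approximate multiplicativity over the Følner set $M$ must be invoked, since iterating $\alpha(k^n b) \approx \phi^n \alpha(b)$ for $k = g^{-1}h$ forces the offending phase $\phi$ to be trivial. Making this quantitative, and then reconciling the two namings of the basis so that the index mismatches $\sum_j |M'_j \setminus \varsigma(h)M'_j|$ stay within the error budgets, is the delicate heart of the argument; every remaining estimate is a direct appeal to one of the propositions of Section \ref{sec.prelim}.
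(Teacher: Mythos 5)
Your outline reproduces the paper's proof essentially step by step: the same random-vector sampling (Corollaries \ref{cor.09-12.1} and \ref{cor.09-12.2}), the same recursive selection of $\xi_1,\ldots,\xi_m$ terminating once the orbit span has trace at least $\frac{\kappa}{2}\Delta(X)$, the same orthonormalization via Proposition \ref{prop.09-13.1}, the same permutation-induced $\gamma$ controlled by Proposition \ref{prop.09-15.2} and the corner estimate of Proposition \ref{prop.09-15.6}, and the same construction of $\beta$ from Propositions \ref{prop.09-15.6} and \ref{prop.09-15.3}. The one point where you depart from the paper is the point you flag as the delicate heart of the argument, namely Condition $(M,\lambda)$-I for the sampled vectors, and your diagnosis there is exactly right --- indeed sharper than the paper's own treatment. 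The paper simply asserts that the separation clause together with Proposition \ref{prop.09-11.3} yields $\mathbb{E}[|\langle \alpha(g)\boldsymbol{\xi},\alpha(h)\boldsymbol{\xi}\rangle|] \leq \frac{\lambda}{8|M|^2}$, but Proposition \ref{prop.09-11.3} applied to $a = \alpha(g)-\alpha(h)$ controls only $\mathbb{E}[\,||\alpha(g)\boldsymbol{\xi}-\alpha(h)\boldsymbol{\xi}||^2] = 2 - 2\,\mathbb{E}[\mathrm{Re}\langle \alpha(g)\boldsymbol{\xi},\alpha(h)\boldsymbol{\xi}\rangle]$, i.e.\ the real part of the inner product, not its modulus. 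So the step you single out as the main obstacle is precisely the step at which the paper's argument is a non sequitur.

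However, your proposed repair --- iterating approximate multiplicativity to force the offending phase to be trivial --- cannot succeed, because the Lemma as stated is false for torsion elements of small order. Take $G = \mathbb{Z}/2 = \{e,k\}$, $M = L = G$ (so the F{\o}lner hypothesis holds for every $\eta$), and $\alpha(e) = I_X$, $\alpha(k) = -I_X$. This is exactly multiplicative and satisfies $||\alpha(e)-\alpha(k)||_{\mathrm{HS}} = 2 \geq \sqrt{2}$, so it is an $\left(M,\frac{\lambda}{8|M|^2}\right)$-hyperlinear approximation for every $\lambda$; multiplicativity forces only $\phi^2 = 1$ and separation then forces $\phi = -1$, a nontrivial character, so no iteration of multiplicativity removes the phase. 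Moreover the conclusion genuinely fails for this $\alpha$: any $\gamma(k)$ that permutes an orthonormal basis of $Z$ has $\mathrm{tr}(\gamma(k)) \geq 0$, whence
\[ \frac{1}{\Delta(Z)}\,\mathrm{tr}\bigl((\gamma(k)-\alpha(k))^\ast(\gamma(k)-\alpha(k))\bigr) = 2 + \frac{2\,\mathrm{tr}(\gamma(k))}{\Delta(Z)} \geq 2, \]
which can never be at most $2\eta+4\kappa+5C_{|M|}\lambda$. The same computation with a nontrivial character of $\mathbb{Z}/3$ or $\mathbb{Z}/4$ tensored with $I_X$ shows the problem is not special to order two, and it propagates to Theorem \ref{thm} itself. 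Your pigeonhole observation does kill phases attached to elements of order at least five (five powers of a phase cannot stay pairwise $\sqrt{2}$-separated on the circle), but orders two, three and four survive. So what you identified is not a technical wrinkle needing a quantitative patch but a defect in the hypotheses: the Lemma becomes provable along the lines you (and the paper) sketch only if the separation clause is strengthened to control the modulus of the normalized trace, e.g.\ requiring $||\alpha(g)-z\alpha(h)||_{\mathrm{HS}} \geq \sqrt{2}-\epsilon$ for all unimodular $z$, or equivalently $\frac{1}{\Delta(X)}|\mathrm{tr}(\alpha(g)^\ast\alpha(h))| \leq \epsilon$ for distinct $g,h$.
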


The key point in the above lemma is that $\kappa$ does not appear in the bounds involving $\beta$. This will allow us to apply the lemma roughly $\frac{2}{\kappa}$ times to allow the accumulated spaces $Z$ to almost fill out $X$. 

\begin{proof}[Proof of Lemma \ref{lem}]

Let $X$ be a finite dimensional Hilbert space and let $\alpha:G \to \mathrm{U}(X)$ be an $\left(M,\frac{\lambda}{8|M|^2}\right)$-hyperlinear approximation to $G$. Also let $\boldsymbol{\xi}$ be a uniform random element of the unit sphere of $X$. \\
\\
Since $\alpha$ is an $(M,\frac{\lambda}{8})$-hyperlinear approximation to $G$, Proposition \ref{prop.09-11.3} implies that we have that for each distinct pair $g,h \in M$ we have \[ \sqrt{2}+\frac{\lambda}{8|M|^2} \geq \mathbb{E}[||\alpha(g)\boldsymbol{xi} - \alpha(h)\boldsymbol{\xi}||^2] \geq \sqrt{2} -\frac{\lambda}{8|M|^2}  \] and therefore \[ \mathbb{E}[|\langle \alpha(g)\boldsymbol{\xi},\alpha(h)\boldsymbol{\xi}\rangle|] \leq \frac{\lambda}{8|M|^2} \] By Markov's inequality we find \[ \mathbb{P}[|\langle\alpha(g)\boldsymbol{\xi},\alpha(g)\boldsymbol{\xi}\rangle| \leq \lambda] \geq 1- \frac{1}{8|M|^2} \] Similarly, for each pair $g,h \in M$ we have \[ \mathbb{P}[||\alpha(gh)\boldsymbol{\xi}-\alpha(g)\alpha(h)\boldsymbol{\xi}|| \leq \lambda] \geq 1- \frac{1}{8|M|^2} \] By intersecting the above sets over all pairs $g,h \in M$ we obtain \begin{equation} \label{eq.prob} \mathbb{P}[\,\mbox{Condition }(M,\lambda)-\mathrm{I}\mbox{ or }(M,\lambda)-\mathrm{II}\mbox{ fails for }\boldsymbol{\xi}] \leq \frac{1}{4} \end{equation} We now perform a recursive construction. In the initial stage, we let $\xi_1 \in X$ be any unit vector satisfying conditions $(M,\lambda)$-I and $(M,\lambda)$-II. We also let $p_1$ denote the projection onto the span of $\{\alpha(g)\xi_1:g \in F\}$.\\
\\
Now, let $n \geq 1$ and consider the $(n+1)^{\mathrm{st}}$ stage. \begin{description} \item[(Recursive hypothesis)] Suppose we have unit vectors $\xi_1,\ldots,\xi_n \in X$ satisfying Condition $(M,\lambda)$-I and $(M,\lambda)$-II such that if we write $p_k$ for the projection onto the span of $\{\alpha(g)\xi_k:g \in M\}$ and $q_k = p_1\vee \cdots \vee p_k$ then for all $k \in \{1,\ldots,n-1\}$ we have \[ \frac{1}{|M|} \sum_{g \in M} ||q_k\alpha(g)\xi_{k+1}|| \leq \kappa \] 
\item[(Termination condition)] If we have $\mathrm{tr}(q_n) > \frac{\kappa}{2} \Delta(X)$ then we may terminate the construction. 
\item[(Extension procedure)]Suppose $\mathrm{tr}(q_n) \leq \frac{\kappa}{2} \Delta(X)$. We may apply Corollary \ref{cor.09-12.2} to obtain \[ \mathbb{P}\left[ \frac{1}{|M|} \sum_{g \in M} ||q_n \alpha(g) \boldsymbol{\xi}|| > \frac{2}{\Delta(X)}\mathrm{tr}(q_n) \right] \leq \frac{1}{2} \] and therefore \[ \mathbb{P}\left[ \frac{1}{|M|} \sum_{g \in M} ||q_n \alpha(g) \boldsymbol{\xi}|| \leq \kappa \right] \geq \frac{1}{2} \] By combining the previous display with (\ref{eq.prob}) we have \[ \mathbb{P}\left[\mbox{ Conditions }(M,\lambda)-\mathrm{I}\mbox{ and }(M,\lambda)-\mathrm{II}\mbox{ hold for }\boldsymbol{\xi}\mbox{ and } \frac{1}{|M|} \sum_{g \in M} ||q_n \alpha(g) \boldsymbol{\xi}|| \leq \kappa \right] \geq \frac{1}{4} \] Then we can choose an element of the nonempty set in the previous display to serve as $\xi_{n+1}$. \end{description} 

The output of the construction is a sequence of vectors $\xi_1,\ldots,\xi_m$ satisfying the recursive hypothesis such that $\mathrm{tr}(q_m) > \frac{\kappa}{2}\Delta(X)$. We now perform another recursive construction. At the first stage, we apply Proposition \ref{prop.09-13.1} to $\xi_1$ with $\rho=0$ to obtain an orthonormal family of vectors $(\zeta_{1,g})_{g \in F}$ such that \[ \frac{1}{|M|} \sum_{g \in M} ||\zeta_{1,g}-\alpha(g)\xi_1|| \leq 4C_{|M|}\lambda \] We now let $k \in \{1,\ldots,m-1\}$ and consider the $(k+1)^{\mathrm{st}}$ stage.

\begin{description} \item[(Recursive hypothesis)] Suppose that for all $j \in \{1,\ldots,k\}$ we have a subset of $M_j$ of $M$ satisfying $|M_j| \geq (1-\kappa)|M|$, an orthonormal family of vectors $\{\zeta_{g,j}:1 \leq j \leq k, g \in M_j\}$ whose span is equal to the range of $q_k$ and such that for all $j \in \{1,\ldots,k\}$ we have \[ \frac{1}{|M_j|} \sum_{g \in M_j} ||\zeta_{g,j}-\alpha(g)\xi_j|| \leq \kappa+ 4C_{|M|}\lambda \] \item[(Extension procedure)] Since the our construction of $\xi_{k+1}$ implies that we have \[ \frac{1}{|M|} \sum_{g \in M}||q_k\alpha(g)\xi_{k+1}|| \leq \kappa \] we can apply Proposition \ref{prop.09-13.1} with $\rho = \kappa$ to obtain a set $M_{k+1} \subseteq M$ with $|M_{k+1}| \geq (1-\kappa)|M|$ and an orthonormal family of vectors $(\zeta_{g,k+1})_{g \in M_{k+1}}$ such that \[ \frac{1}{|M_j|} \sum_{g \in M_j}||\alpha(g)\xi_{k+1} - \zeta_{g,k+1}|| \leq 4(\kappa+C_{|M|}\lambda) \] and such that $q_k\zeta_{g,k+1} = 0$ for all $g \in M_{k+1}$. The last condition implies that $\{\zeta_{g,j}:g \in F, 1 \leq j \leq k+1\}$ is again an orthonormal family and so the recursive hypothesis is again satisfied. \end{description}

The output of the construction is an orthonormal family of vectors $\{\zeta_{g,j}:1 \leq j \leq m, g \in M_j\}$ whose span has dimension at least $\frac{\kappa}{2}\Delta(X)$ and such that \begin{equation} \label{eq.09-18.5} \frac{1}{|M_j|} \sum_{g \in M} ||\alpha(g)\xi_j - \zeta_{g,j}|| \leq 4(\kappa+C_{|M|}\lambda) \end{equation} for all $j \in \{1,\ldots,m\}$. Furthermore, the the span of $\{\zeta_{g,j}:1 \leq j \leq m, g \in M_j \}$ is equal to the span of \begin{equation} \label{eq.09-18.4} \{\alpha(g)\xi_j:1 \leq j \leq m,g \in M\} \end{equation} and Conditions $(M,\lambda)$-I and $(M,\lambda)$-II are satisfied for each $\xi_j$. Thus we may apply Proposition \ref{prop.09-15.2} to find that if we write $p$ for the orthogonal projection on the span of the set in (\ref{eq.09-18.4}) then we have \[ ||(I-p)\alpha(h)p||_{\mathrm{HS}} \leq  \sqrt{\frac{m|M|(\eta+5C_{|M|}\lambda)}{\Delta(X)}} \] for all $h \in L$. Therefore by applying \ref{prop.09-15.3} to $I-p$ for $\alpha(h)$ for each $h \in L$ we obtain an operator $\beta(h)$ which commutes with $I-p$ and satisfies $\beta(h)^\ast \beta(h) = I-p$ and \begin{equation} ||(\beta(h)-\alpha(h))(I-p)||_{\mathrm{HS}} \leq 4\sqrt{\frac{m|M|(\eta+5C_{|M|}\lambda)}{\Delta(X)}} \leq 4\sqrt{\eta+5C_{|M|}\lambda}  \label{eq.09-19.8} \end{equation} Now, let $h \in L$. Since $|M_j| \geq (1-\kappa)|M|$ and $|hM \cap M| \geq (1-\eta)|M|$ we have  \[ |hM_j \cap M_j| \geq (1-\kappa-\eta)|M_j| \] Therefore we can find a permutation $\varsigma_j(h)$ of $M_j$ such that \[ |\{g \in M_j:\varsigma_j(h)g \neq hg\}| \leq (\kappa+\eta)|M_j| \] By applying Proposition \ref{prop.09-15.2} to (\ref{eq.09-18.5}) with $\varpi = 4(\kappa+C_{|M|}\lambda)$ we obtain \[ \frac{1}{|M_j|} \sum_{g \in M_j} ||\alpha(h)\zeta_{j,g} - \zeta_{j,\varsigma_j(h)g}|| \leq 2\eta + 4\kappa+5C_{|M|}\lambda \] Define $\gamma(h)$ to be the operator $\bigoplus_{j=1}^m \varsigma_j(h)$. Then both $\beta(h)$ and $\gamma(h)$ commute with $p$. Furthermore, (\ref{eq.09-19.8}) ensures that $\beta$ is an $4\sqrt{\eta+5C_{|M|}\lambda}$ hyperlinear approximation to $G$. Therefore the decomposition $\beta \oplus \gamma$ as required to verify Lemma \ref{lem}. \end{proof}

\section{Proof of Theorem \ref{thm}}

Let $G$ be an amenable group, let $F$ be a finite subset of $G$ and let $\epsilon > 0$. Let $\kappa = \frac{\epsilon}{8|F|^2}$. We now perform a recursive construction. Let $L_1 = F$ and let $\lambda_1,\eta_1 > 0$ be small enough that \[ 2\eta_1+4\kappa+5C(|L_1|)\lambda_1 \leq \epsilon \]

\begin{description} \item[(Recursive hypothesis)] Let $n \in \mathbb{N}$. Suppose we have chosen numbers $\eta_1 > \cdots > \eta_n > 0$, finite subsets $L_1 \subseteq \cdots \subseteq L_n$ of $G$ and numbers $\lambda_1 > \cdots > \lambda_n > 0$ such that the following conditions are satisfied. \begin{description} \item[(Item A)] For all $j \in \{1,\ldots,n-1\}$ we have \[ \sqrt{ \eta_{j+1}+5 C_{|L_{j+1}|}\lambda_{j+1} } \leq \frac{\lambda_j}{8|L_j|^2} \] \item[(Item B)] For all $j \in \{1,\ldots,n\}$ we have \[ 2\eta_j+4\kappa+5C_{|L_j|}\lambda_j \leq \epsilon \]  \end{description} \item[(Extension procedure)] First choose $\eta_{n+1} > 0$ such that $\eta_{n+1} \leq \frac{\lambda_n^2}{2}$ and $2\eta_{n+1} \leq \frac{\epsilon}{4}$. Since $G$ is amenable, we can choose a finite subset $L_{n+1}$ of $G$ such that $|gL_{n+1} \cap L_{n+1}| \geq (1-\eta_{n+1})|L_{n+1}|$ for all $g \in L_n$. Then, choose $\lambda_{n+1} > 0$ such that   \[ 5C_{|L_{n+1}|}\lambda_{n+1} \leq \min\left(\frac{\lambda_n^2}{2} , \frac{\epsilon}{4}\right) \] \end{description}

We extend this procedure until $\left(1-\frac{\kappa}{2}\right)^n \leq \frac{\epsilon}{2}$. Choose $K = L_n$ and $\delta = \lambda_n$. Let $\alpha:G \to \mathrm{U}(X)$ be a $(K,\delta)$-hyperlinear approximation to $G$.\\
\\
We apply Lemma \ref{lem} iteratively. In the first step, we apply it to $\alpha$ with $M = L_n$, $L = L_{n-1}$, $\eta = \eta_n$ and $\lambda = \lambda_n$. In the $\ell^{\mathrm{th}}$ step we apply it to the space $Y_\ell$ and sofic approximation $\beta_\ell$ produced by the previous iteration. Item A ensures that each $\beta_\ell$ produced is an $(L_{n-\ell},\frac{\lambda_{n-\ell}}{8|L_{n-\ell}|^2})$-hyperlinear approximation to $G$ and therefore the next stage of the construction can proceed. Item B ensures that each $\gamma_\ell$ produced is a sofic-induced $(L_{n-\ell},\epsilon)$-hyperlinear approximation to $G$ and we have \[ \frac{1}{\Delta(Z_1)+\cdots+\Delta(Z_\ell)}\mathrm{tr}\Bigl(\bigl((\gamma_1\oplus \cdots \oplus \gamma_\ell)(g)-\alpha(g)\bigr)^\ast\bigl((\gamma_1\oplus \cdots \oplus \gamma_\ell)(g)-\alpha(g)\bigr) \Bigr) \leq \epsilon \] for all $g \in L_{n-\ell}$. Furthermore, the dimension of the underlying space of the remainder $Y_\ell$ is at most $\left(1-\frac{\kappa}{2}\right)\Delta(Y_{\ell-1})$ and so at the $n^{\mathrm{th}}$ stage we have $\Delta(Z_1\oplus \cdots \oplus Z_n) \geq (1-\epsilon)\Delta(X)$. Therefore we can choose $\omega$ to be $\gamma_1 \oplus \cdots \oplus \gamma_n$.

\bibliographystyle{plain}
\bibliography{stability.bib}

\begin{thebibliography}{10}

\bibitem{MR3350728}
Goulnara Arzhantseva and Liviu P\u{a}unescu.
\newblock Almost commuting permutations are near commuting permutations.
\newblock {\em J. Funct. Anal.}, 269(3):745--757, 2015.

\bibitem{BECKER2020108298}
Oren Becker and Alexander Lubotzky.
\newblock Group stability and property (t).
\newblock {\em Journal of Functional Analysis}, 278(1):108298, 2020.

\bibitem{MR4105530}
Lewis Bowen and Peter Burton.
\newblock Flexible stability and nonsoficity.
\newblock {\em Trans. Amer. Math. Soc.}, 373(6):4469--4481, 2020.

\bibitem{capraro-lupini}
Valerio Capraro and Martino Lupini.
\newblock {\em Introduction to {S}ofic and hyperlinear groups and {C}onnes' embedding conjecture}, volume 2136 of {\em Lecture Notes in Mathematics}.
\newblock Springer, Cham, 2015.
\newblock With an appendix by Vladimir Pestov.

\bibitem{2022arXiv221110492D}
Alon {Dogon}.
\newblock {Flexible Hilbert-Schmidt stability versus hyperlinearity for property (T) groups}.
\newblock {\em arXiv e-prints}, page arXiv:2211.10492, November 2022.

\bibitem{Gromov:1999aa}
M.~Gromov.
\newblock Endomorphisms of symbolic algebraic varieties.
\newblock {\em Journal of the European Mathematical Society}, 1(2):109--197, 1999.

\bibitem{Hadwin2017StabilityOG}
Donald~W. Hadwin and Tatiana Shulman.
\newblock Stability of group relations under small hilbert--schmidt perturbations.
\newblock {\em Journal of Functional Analysis}, 2017.

\bibitem{levit_lubotzky_2022}
ARIE LEVIT and ALEXANDER LUBOTZKY.
\newblock Infinitely presented permutation stable groups and invariant random subgroups of metabelian groups.
\newblock {\em Ergodic Theory and Dynamical Systems}, 42(6):2028--2063, 2022.

\bibitem{2022arXiv220602268L}
Arie {Levit} and Itamar {Vigdorovich}.
\newblock {Characters of solvable groups, Hilbert-Schmidt stability and dense periodic measures}.
\newblock {\em arXiv e-prints}, page arXiv:2206.02268, June 2022.

\bibitem{pestov-sofic-survey}
Vladimir~G. Pestov.
\newblock Hyperlinear and sofic groups: a brief guide.
\newblock {\em Bull. Symbolic Logic}, 14(4):449--480, 2008.

\end{thebibliography}

\end{document}